\documentclass[reqno]{amsart}

\usepackage[english]{babel}
\selectlanguage{english}
\usepackage[T1]{fontenc}
\usepackage[utf8]{inputenc}
\usepackage{amsthm}
\usepackage{amsmath}
\usepackage{amssymb}
\usepackage{wasysym}
\usepackage{ragged2e}
\usepackage{color}
\usepackage{mathrsfs}
\usepackage[all,cmtip]{xy}
\usepackage{enumerate}
\usepackage{hyperref}
\theoremstyle{remark}

\theoremstyle{plain}
\newtheorem{defin}{Definition}[section]

\newtheorem{prop}[defin]{Proposition}
\newtheorem{thm}[defin]{Theorem}
\newtheorem{cor}[defin]{Corollary}
\newtheorem{lm}[defin]{Lemma}
\theoremstyle{remark}
\newtheorem{rmk}{Remark}
\theoremstyle{remark}
\newtheorem*{ex}{Example}
\newcommand{\prodscal}[2]{\left\langle#1,#2\right\rangle}
\usepackage[utf8]{inputenc}

\newcommand{\BB}{\mathbb{B}}
\newcommand{\CC}{\mathbb{C}}

\newcommand{\GG}{\mathbb{G}}
\newcommand{\HH}{\mathbb{H}}

\newcommand{\sA}{\mathcal{A}}

\newcommand{\sD}{\mathcal{D}}
\newcommand{\sE}{\mathcal{E}}

\newcommand{\sL}{\mathcal{L}}

\newcommand{\sV}{\mathcal{V}}

\DeclareMathOperator{\Hom}{Hom}

\newcommand{\btimes}{\hat{\otimes}}

  {\par\begin{tabular}{rcl}}%
  {\end{tabular}\par}

\setlength{\marginparwidth}{25mm}




\makeatletter
\newcommand*\slot{\mathpalette\slot@{.5}}
\newcommand*\slot@[2]{\mathbin{\vcenter{\hbox{\scalebox{#2}{$\m@th#1\bullet$}}}}}
\makeatother







\DeclareMathOperator{\Ind}{Ind}

\DeclareMathOperator{\id}{id}



\title{Explicit Rieffel induction module for quantum groups}
\author{Damien Rivet}
\address{Université Clermont Auvergne, CNRS, LMBP, F-63000 Clermont-Ferrand, France}
\email{damien.rvt@gmail.com}

\subjclass[2010]{20G42, 16T05, 46L65, 46L51}
\keywords{Induction, Quantum groups, bornological algebras, algebraic quantum groups, locally compact quantum groups, semisimple quantum groups} 

\begin{document}

\maketitle
\begin{abstract}
For $\mathbb{G}$ an algebraic (or more generally, a bornological) quantum group and $\mathbb{B}$ a closed quantum subgroup of $\mathbb{G}$, we build in this paper an induction module by explicitly defining, on the convolution algebra of $\mathbb{G}$, an inner product which takes its value in the convolution algebra of $\mathbb{B}$, as in the original approach of Rieffel. In this context, we study the link with the induction functor defined by Vaes. In the last part we illustrate our result with parabolic induction of complex semi-simple quantum groups. We first show that our induction functor coincides with the one already defined in the case of parabolic induction. Then we use the tools developed in this paper to give a geometric interpretation to the parabolic induction functor, following the approach suggested by Clare in the classical case.
\end{abstract}
\section{Introduction}
Let $G$ be a locally compact group and $B$ a closed subgroup of $G$. One can build unitary representations of $G$ from those of $B$ with the unitary induction procedure due to Mackey \cite{Mackey}, who also developed the concept of imprimitivity. Rieffel \cite{Rieffel} gave an alternative and more general formulation in the $C^*$-algebraic setting by using $C^*$-Hilbert modules. In short, there exists a Hilbert $C^*_u(B)$-module $\mathcal{E}(G)$, with a left representation of $C^*_u(G)$, such that for a unitary representation of $B$ on any Hilbert module $V$, $\mathcal{E}(G)\otimes_{C^*_u(B)}V$ corresponds to the induced unitary representation of $G$.

In the case where $\mathbb{G}$ is a locally compact quantum group and $\mathbb{B}$ a closed quantum subgroup, induction procedures have been developed by  Kustermans \cite{Kustermans} and Vaes \cite{Vaes}. Vaes was able to formulate this in a wide framework and to state imprimitivity theorems.
In this paper we develop an approach closer to the original one of Rieffel, by directly defining the induction module $\mathcal{E}(\GG)$.

The main difficulty is that, unless $\mathbb{B}$ is also an open subgroup of $\mathbb{G}$ (the case treated in \cite{Kalantar}), we don't have an inclusion of $C^*_u(\mathbb{B})$ into $C^*_u(\mathbb{G})$ and so it is not possible to define a conditional expectation from $C^*_u(\mathbb{G})$ to $C^*_u(\mathbb{B})$. In his original paper Rieffel avoided this issue by considering the convolution algebra $C_c(G)$ of compactly supported functions on $G$, instead of the full space $C^*(G)$. Then he defined a weak conditional expectation $C_c(G)\rightarrow C_c(B)$.

The bornological setting for quantum groups developed by C. Voigt \cite{Born} allows us to consider algebras with similar properties and then to define an analogue of the weak conditional expectation. One of the main goals of this paper is to show that, in this particular case, the induction functor we obtain is the same as the one defined by Vaes. We thus get a more direct way to compute induced representations for regular bornological quantum groups and to apply the powerful imprimitivity theorems. We remark that the class of bornological quantum groups is a large subclass of locally compact quantum groups. The only known obstruction to being bornological is non-regularity and regularity is already a necessary condition in Vaes imprimitivity theorem. The class of bornological quantum groups includes compact quantum groups, algebraic quantum groups \cite{Vandaele} (and in particular complex semi-simple quantum groups) and classical locally compact groups. We remark that our results are new even in the case of algebraic quantum groups.

In the last part, we illustrate the general construction with the example of principal series representations of a semi-simple complex quantum group $G_q$ \cite{yuncken-voigt}. First, we show that our induction functor coincides with the one already defined in the case of parabolic induction \cite{Arano,yuncken-voigt}. After that, in analogy to the classical case in \cite{CCH}, we build a module using a $G_q$-space $G_q/N_q$, which implements the parabolic induction. The notation $G_q/N_q$ is meant to suggest a homogeneous space with respect to a quantum analogue of the classical unipotent subgroup although we do not actually use any such subgroup in its definition. As well as giving a noncommutative geometry perspective on the parabolic induction functor for $G_q$ (similar to the approach of \cite{Gelf} in the case $G_q=SL_q(2,\CC)$), we can thus provide a new description of the structure of the reduced $C^*$-algebra $C^*_r(G_q)$ following the results of \cite{Plancherel}\cite{MonkV}.

In our proofs, we will make certain assumptions on the bornological quantum groups we consider.  To begin with, we will assume that the scaling constants of $\mathbb{G}$ and $\mathbb{B}$ are $1$, as well as the scaling constant associated to the restriction $\pi(\delta_{\mathbb{G}})$ of the modular element of $\mathbb{G}$ to $\mathbb{B}$, see Remark \ref{rmk:hyp}. These hypotheses are almost certainly unnecessary, but they greatly simplify the constructions and in any case, at present, we don't know any examples of bornological quantum groups for which they do not hold.  More significantly, we will suppose that the closed quantum subgroup is amenable, so that $C^*_u(\mathbb{B})=C^*_r(\mathbb{B})$.  
This hypothesis is made for a technical reason, namely to prove positivity of a the $C^*(\mathbb{B})$-valued inner product on the Rieffel induction module. Again, we suspect this is not necessary, but we don't currently have a proof of positivity in the general case.

\section{Bornological quantum groups}
\label{sec:BQG}

Bornological quantum groups, defined by Voigt \cite{Born}, are a generalization of algebraic quantum groups introduced by Van Daele \cite{Vandaele} where most of the interesting properties stay valid.
In this section, we recall the definition of a bornological quantum group and state some of the important properties.  Many of the properties which we shall need in this paper, particularly concerning the relationship between Voigt's theory of bornological quantum groups \cite{Born} and Kustermans and Vaes' theory of locally compact quantum groups \cite{K&V}, are analogues of well-known properties of algebraic quantum groups which had not previously been proven in the bornological context.  This void is filled by the article \cite{RY}.  We will make much use of that article as well as Voigt's original article.  But in order to keep this article self-contained, we will summarize the necessary results here.

For the reader interested only in algebraic quantum groups, it is possible to read this entire article replacing ``bornological quantum groups'' with ``algebraic quantum groups''.  Then bounded maps become arbitrary maps, bornological tensor products becoming algebraic tensor products, and so on, see Example \ref{ex:AQG}.  In this case, the prerequisite results are all well-known from the works of Van Daele, Kustermans and De Commer \cite{Vandaele,Kus2, kus2003,commer}.

We begin with the basic definitions of bornological vector spaces.  For more details see \cite{hogbe, MEY}.

A bornology on a vector space $V$ is a covering family $\mathcal{B}$ of subsets of $V$, called bounded sets, which is stable under taking subsets and finite unions, and such that the vector space operations map bounded sets to bounded sets. 
The guiding example is the set of bounded subsets of a topological vector space.
We will always impose the convexity condition that $\mathcal{B}$ is stable under taking balanced convex hulls, often called disks.  Each bounded disk $D$ gives rise to a seminorm on its linear span $V_D = \mathrm{span(D)}$ for which $\overline{D}$ is the unit ball.  Then $V$ is called complete if every bounded set is contained in some bounded disk $D$ for which $V_D$ is a Banach space.

A map between bornological vector spaces is called bounded if it maps bounded sets to bounded sets.  If $V$ and $W$ are complete bornological vector spaces, then there exists a bornological tensor product $V\hat\otimes W$ which is a universal target for bounded bilnear maps from $V\times W$.  One of the nice features of bornological vector spaces is the Hom-tensor adjunction
\[
  \mathrm{Hom}(V\hat\otimes W, X) \cong \mathrm{Hom}(V,\mathrm{Hom}(W,X))
\] 
where $\mathrm{Hom}$ denotes the bounded linear maps.
In order to avoid pathologies, one should add the \emph{approximation property}, which says that the identity map of $V$ can be approximated uniformly on compact subsets by finite-rank operators, see \cite{MEY2}.  This technical condition will be true of all our examples, and we will not make mention of it.

A bornological algebra is a complete bornological vector space $\mathcal{A}$ equipped with a bounded algebra product $\mathcal{\mathcal{A}} \times \mathcal{A} \to \mathcal{A}$.  It therefore extends to the bornological tensor product $\mathcal{A}\hat\otimes \mathcal{A} \to \mathcal{A}$.  It is called essential if the product induces a bornological isomorphism $\mathcal{A}\hat\otimes_\mathcal{A} \mathcal{A} \cong \mathcal{A}$.

The space of two-sided multipliers of a bornological algebra $\mathcal{A}$ is denoted $\mathcal{M}(\mathcal{A})$.  See \cite[Section 3]{Born} for the precise definition.  A bounded algebra morphism $f:\mathcal{A} \to \mathcal{M}(\mathcal{B})$ is called essential if it induces bornological isomorphisms $\mathcal{A} \hat\otimes_\mathcal{A} \mathcal{B} \cong \mathcal{B} \cong \mathcal{B}\hat\otimes_\mathcal{A} \mathcal{A}$.  In this case $f$ extends uniquely to the multiplier algebra of $\mathcal{A}$.

We modify Voigt's original definition of a bornological quantum group by adding a $*$-structure.  A $*$-structure on a bornological algebra is a bounded involutive anti-automorphism on $\sA$.  For more details, see \cite{RY}.

\begin{ex}
	\label{ex:AQG}
	Any vector space $V$ can be equipped with the $\mathfrak{fine}$ bornology, for which the bounded subsets are precisely the bounded subsets (in the usual sense) of finite dimensional subspaces of $V$.  Any linear map from $V$ to a bornological vector space $W$ is bounded with respect to the fine bornology, and the bornological tensor product $V\otimes V$ is just the algebraic tensor product.  
	
	Thus any essential $*$-algebra $\mathcal{A}$ is an essential bornological algebra with the fine bornology.  The bornological multiplier algebra is just the algebraic multiplier algebra.
\end{ex}

Let $\Delta : \mathcal{A} \rightarrow M(\mathcal{A}\hat{\otimes} \mathcal{A})$ be a $*$-homomorphism, The maps $\mathcal{A}\hat{\otimes} \mathcal{A}\rightarrow M(\mathcal{A}\hat{\otimes} \mathcal{A})$ 
$$\gamma_l : f\otimes g\mapsto \Delta(f)(g\otimes 1),~~~~~\gamma_r : f\otimes g\mapsto \Delta(f)(1\otimes g),$$
are called left Galois maps associated to $\Delta$ and 
$$\rho_l : f\otimes g\mapsto (f\otimes 1)\Delta(g),~~~~~\rho_r : f\otimes g\mapsto (1\otimes f)\Delta(g) $$
the right Galois maps.\\
\begin{rmk}
Since in our case we consider $*$-algebras, note that right Galois maps can be recovered from left ones by composing with the involution $*$.
\end{rmk}
If we suppose that $\Delta : \mathcal{A} \rightarrow M(\mathcal{A}\hat{\otimes} \mathcal{A})$ is essential, then one can define $(\Delta\hat{\otimes} \text{id})\circ \Delta$ and $( \text{id}\hat{\otimes} \Delta)\circ \Delta$ as maps from $ \mathcal{A} $ to $ M(\mathcal{A}\hat{\otimes} \mathcal{A}\hat{\otimes} \mathcal{A})$. If these maps coincide then we say that the homomorphism $\Delta$ is coassociative.

\begin{defin}
Definition 4.1. An essential $*$-homomorphism $\Delta : \mathcal{A} \rightarrow M(\mathcal{A}\hat{\otimes} \mathcal{A})$ is called  a comultiplication if is coassociative. 
\end{defin}
\begin{defin}
Let $\Delta : \mathcal{A} \rightarrow M(\mathcal{A}\hat{\otimes} \mathcal{A})$ be a comultiplication such that all Galois
maps associated to $\Delta$ define bounded linear maps from $\mathcal{A}\otimes \mathcal{A}$ into itself.
A bounded linear functional $\phi : \mathcal{A}\rightarrow \mathbb{C} $ is called left invariant if for all $a\in \mathcal{A}$, $(\iota\hat{\otimes}\phi)(\Delta(a))=\phi(a)1 $. Similarly, a bounded linear functional  $\phi : \mathcal{A}\rightarrow \mathbb{C} $ is called right invariant if for all $a\in \mathcal{A}$, $(\phi\hat{\otimes} \iota)(\Delta(a))=\phi(a)1. $

\end{defin}

\begin{defin}
A bornological quantum group is an essential bornological $*$-algebra  $\mathcal{A}(\mathbb{G})$ satisfying the approximation property, together with a $*$-preserving comultiplication $\Delta : \mathcal{A}(\mathbb{G})\rightarrow M(\mathcal{A}(\mathbb{G})\hat{\otimes} \mathcal{A}(\mathbb{G}))$, such that all Galois maps associated to $\Delta$ are isomorphisms, and a faithful left invariant positive functional $\phi_{\mathbb{G}}$.\end{defin}

According to \cite[Theorem 4.8]{Born}, the hypothesis on Galois maps ensures that there exists a uniquely determined bounded homomorphism $\epsilon : \mathcal{A}(\mathbb{G})\rightarrow \mathbb{C}$ and a linear isomorphism $S : \mathcal{A}(\mathbb{G})\rightarrow \mathcal{A}(\mathbb{G})$ which is both an algebra and coalgebra antihomomorphism such that 
$$(\epsilon\hat{\otimes} \iota)\circ\Delta=\iota=(\iota\hat{\otimes} \epsilon)\circ\Delta $$
and
$$\mu(S\hat{\otimes}\iota)\circ \gamma_r=\epsilon\hat{\otimes} \iota~~~~~\text{and}~~~~~\mu(\iota\hat{\otimes} S)\circ \gamma_l=\iota\hat{\otimes }\epsilon.$$
where $\mu : \mathcal{A}(\mathbb{G})\hat{\otimes} \mathcal{A}(\mathbb{G})\rightarrow \mathcal{A}(\mathbb{G})$ designates the multiplication of $\mathcal{A}(\mathbb{G}).$
The functional $\phi_{\mathbb{G}}$ is called a left Haar integral.

We will often use Sweedler notation  $\Delta(f) = f_{(1)}\otimes f_{(2)}$ to denote the coproduct of $f\in\sA$.  For bornological quantum groups, this is a purely formal notation to designate the position of the multiplier $\Delta(a)$ in the legs of a tensor product expression.  

\begin{ex}
	If $G$ is a Lie group, then $\sA(G) = C^\infty_c(G)$ is a bornological quantum group when equipped with the bornology of its usual $LF$-topology, the pointwise product, the coproduct given by pullback along the group law $G\times G\rightarrow G$, and where $\phi_G$ is integration with respect to left Haar measure.	
\end{ex}

\begin{ex}
	An algebraic quantum group is a bornological quantum group with the fine bornology (where the bounded sets are the compact subsets of finite dimensional subspaces).
\end{ex}

\begin{prop}
There is a unique bounded algebra automorphism $\sigma:\sA\to\sA$ such that $\phi(ab) = \phi_{\mathbb{G}}(b\sigma(a))$ for all $a,b\in\sA$.
\end{prop}
See \cite[Proposition 5.3]{Born} for details on this automorphism. We also give two properties that will be used later in this paper.
\begin{prop}
We have $\Delta \circ \sigma = (S^2 \otimes \sigma)\circ\Delta$ and $\sigma(\overline{a}) = \overline{\sigma^{-1}(a)}$ for all $a\in\sA$.
\label{sigma}
\end{prop}

\paragraph{\textbf{The modular element}}
\begin{prop}
 There exists an invertible self-adjoint element $\delta_{\mathbb{G}}\in M(\mathcal{A}(\mathbb{G}))$, called the modular element associated with the Haar state $\phi_{\mathbb{G}}$, defined by the property 
$$(\phi_{\mathbb{G}}\hat{\otimes} \iota)(\Delta(f))=\phi_{\mathbb{G}}(f)\delta_{\mathbb{G}}\in M(\mathcal{A}(\mathbb{G})),~\forall f\in \mathcal{A}(\mathbb{G}).$$
We also mention the notable property
$$\phi_{\mathbb{G}}(S(f))=\phi_{\mathbb{G}}(f\delta_{\mathbb{G}}).$$
\end{prop}
\begin{thm}
 \label{thm:delta-z}
 For all $z\in \mathbb{C}$, there exists a unique bounded mutliplier of $\mathcal{A}(\mathbb{G})$ denoted $\delta^z_{\mathbb{G}}$ such that
 \begin{enumerate}
     \item For any $z\in \mathbb{C}$, $\overline{\delta^z_{\mathbb{G}}}=\delta^{\bar{z}}_{\mathbb{G}}$
     \item For any $y,z\in \mathbb{C}$, $\delta^y_{\mathbb{G}}\delta^z_{\mathbb{G}}=\delta^{y+z}_{\mathbb{G}}$,
     \item For any $t\in\mathbb{R}, \delta^{it}_{\mathbb{G}}$ is unitary in ${M}(\mathcal{A}(\mathbb{G}))$,
     \item For any $t\in\mathbb{R}$, $\delta^{t}_{\mathbb{G}}$ is a positive element, in the sense that $\delta^{t}_{\mathbb{G}}=\delta^{t/2}_{\mathbb{G}}\delta^{t/2}_{\mathbb{G}}$ and $\delta^{t/2}_{\mathbb{G}}$ is a self adjoint element.
     \end{enumerate}
\end{thm}
This result is Theorem 3.27 in \cite{RY}. In the present paper we will only use the element $ \delta^{1/2}_{\mathbb{G}}$.
\begin{rmk}
We recall that in \cite{RY}, we have made the hypothesis that $\sigma_\GG(\delta_\GG)=\delta_\GG$, that is the scaling constant equals $1$. This assumption is also made in this paper.
\end{rmk}
\begin{prop}\label{prop:sbar}
There exists an automorphism $|S|$ of $\sA(\GG)$ such that $|S|^2=S^2$, $\phi_\GG\circ|S|=\phi_\GG$ and $|S|(\bar{f})=\overline{|S|^{-1}(f)}$ for all $f\in\sA(\GG)$.
\end{prop}
\paragraph{\textbf{Pontryagin Duality}}

We Recall that we define the space  $\mathcal{A}(\mathbb{\hat{G}})$ as a subspace of bounded linear functionals on $\mathcal{A}(\hat{\GG})$:
$$\mathcal{A}(\mathbb{\hat{\GG}})=\{\phi_{\mathbb{G}}(\cdot f),f\in \mathcal{A}(\hat{G})\}. $$
\begin{prop}
 The bornological space $\mathcal{A}(\mathbb{\hat{G}})$ is a bornologocal quantum group when it is equipped with the multiplier Hopf structure defined by duality :  Let
  $ x,y\in \mathcal{A}(\mathbb{\hat{G}}), f,g\in \mathcal{A}(\mathbb{G}),$
we have
 \begin{align*}
     (xy,f)&=(x\otimes y,\Delta(f)),\\
     (\hat{\Delta}(x),f\otimes g)&=(x,gf),\\
     \hat{\epsilon}(x)&=(x,1),\\
     (\hat{S}(x),f)&=(x,S^{-1}(f)),\\
     x^*(f)&=\overline{x(S(f)^*)}.
 \end{align*}
 Moreover it admits a left invariant integral defined by $\phi_{\hat{\mathbb{G}}}(\mathcal{F}(f))=\epsilon(f).$
\end{prop}

\justify The following result is Theorem 2.7 from \cite[Section 7]{Born}.
\begin{thm}
The double dual quantum group of $\mathcal{A}(\mathbb{G})$ is canonically isomorphic to $\mathcal{A}(\mathbb{G})$.
\end{thm}
In order to do calculations similar to the classical case when one considers the convolution algebra of a locally compact group $G$, we introduce the following notations.
\begin{defin}
We consider the $*$-bornological algebra $\mathcal{D}(\mathbb{G})$ with $\mathcal{D}(\mathbb{G})=\mathcal{A}(\mathbb{G})$ as a bornological vector space, and with product and involution given by
\begin{align*}
    f*g&=(\text{id}\otimes\phi_{\mathbb{G}})[(1\otimes S^{-1}(g))(\Delta(f))],\  \forall f,g\in \mathcal{D}(\mathbb{G}),\\
    f^*&=\overline{S(f)}\delta_{\mathbb{G}}.
\end{align*}\end{defin}

\justify In what follows we will use $f\mapsto \Bar{f}$ to denote the $*$-involution of $\mathcal{A}(\mathbb{G})$ to avoid confusion with the $*$-involution of $\mathcal{D}(\mathbb{G})$.\\
\begin{prop}
The map $\mathcal{F} : \mathcal{D}(\mathbb{G})\rightarrow \mathcal{A}(\mathbb{\hat{G}})$ is an isomorphism of $*$-bornological algebras. 
\end{prop}
\begin{rmk}
The reader should be careful that in whole paper we juggle with both $\mathcal{A}(\mathbb{G})$ and $\mathcal{D}(\mathbb{G})$ using everywhere both algebra structures, which can be confusing. 
\end{rmk}
\begin{prop}
  \label{lem:dual_ip}
  For any $f,g\in\sA(\mathbb{G})$ we have $\epsilon(f^* *g) = \phi(\overline{f}g)$.
\end{prop}
\begin{lm}
	\label{lem:convolution_coproduct_compatibility}
	For any $f,g \in \sA(\GG)$ we have the formal equalities
	\[
	 \Delta(f*g) = f_{(1)} \otimes (f_{(2)}*g) = (f*g_{(1)})\otimes g_{(2)}.
	\]
	More precisely, for any $a\in\sA(\GG)$ we have
	\begin{align*}
	 (a\otimes 1)\Delta(f*g) &= af_{(1)} \otimes (f_{(2)}*g) &
	 \Delta(f*g)(a\otimes 1) &= f_{(1)}a \otimes (f_{(2)}*g) \\
   (1\otimes a)\Delta(f*g) &= (f*g_{(1)}) \otimes a g_{(2)} &
	 \Delta(f*g)(1\otimes a) &= (f*g_{(1)}) \otimes g_{(2)}a,
	\end{align*}
	where the right hand side of the first equation is understood by first applying a Galois map to $a\otimes f$ and then taking the convolution with $g$ in the second leg, and similarly for the others.
\end{lm}
\paragraph{\textbf{Modules over a bornological quantum group.}}If $\mathcal{A}(\mathbb{G})$ is a unital bornological quantum group, then an essential left corepresentation of $\mathcal{A}(\mathbb{G})$ (also called a left coaction of $\mathbb{G}$ or an essential left $\mathcal{A}(\mathbb{G})$-comodule) is a bounded linear map
\[
 \alpha: V \to \mathcal{A}(\mathbb{G}) \otimes V
\]
which satisfies the coassociativity and essentiality conditions
\begin{align*}
 (\id\otimes\alpha)\alpha &=  (\Delta\otimes\id)\alpha ,\\
 (\epsilon\otimes\id)\alpha &= \id.
\end{align*}
If $\mathcal{A}(\mathbb{G})$ is not unital, this definition needs to be adjusted.  A corepresentation is then defined as a linear map
\[
 \alpha : V \to \Hom_{\mathcal{A}(\mathbb{G})}(\mathcal{A}(\mathbb{G}), \mathcal{A}(\mathbb{G})\otimes V),
\]
where $\mathcal{A}(\mathbb{G})$ and $\mathcal{A}(\mathbb{G})\otimes V$ are given the natural left $\mathcal{A}(\mathbb{G})$-actions.  The required coassociativity relation on $\alpha$ is given as a pentagonal equation as follows.  Using the Hom-tensor adjunction, we can view $\alpha$ as an element of $\Hom(\mathcal{A}(\mathbb{G})\otimes V; \mathcal{A}(\mathbb{G})\otimes V)$.  Then we require
\[
 \alpha_{23}\, \alpha_{13}\, (\rho_l)_{12} = (\rho_l)_{12} \, \alpha_{23},
\]
where $\rho_l$ is the Galois map from above, and we are using the standard leg-numbering notation for maps on $\mathcal{A}(\mathbb{G}) \otimes \mathcal{A}(\mathbb{G}) \otimes V$.  Essentiality is the requirement that $\alpha$ define a linear isomorphism from $\mathcal{A}(\mathbb{G}) \otimes V$ to itself.

\paragraph{\textbf{Associated locally compact quantum group.}} 
Before moving to the next section, we briefly summarize results of \cite{RY} that establish the link between a bornological quantum group and its associated $C^*$-algebraic quantum group.

There exists a Hilbert space $L^2(\mathbb{G})$ and a $C^*$-algebra $C_0^r(\mathbb{G})\subset B(L^2(\mathbb{G}))$ together with a linear map $\Lambda : \mathcal{A}(\mathbb{G})\rightarrow L^2(\mathbb{G})$ and a bounded algebra homomorphism $m : \mathcal{A}(\mathbb{G})\rightarrow C_0^r(\mathbb{G})$ both with dense images such that $ f,g\in  \mathcal{A}(\mathbb{G})$:
\begin{itemize}
    \item $\prodscal{\Lambda(f)}{\Lambda(g)}=\phi_{\mathbb{G}}(\bar{f}g)$,
    \item $m(f)\Lambda(g)=\Lambda(fg)$.
\end{itemize}
Furthermore $C_0^r(\mathbb{G})$ is equipped with a comultiplication $\Delta$ and a left Haar weight which extend the comultiplication and Haar integral of $\sA(\GG)$.

In \cite{RY}, it is shown that this defines a locally compact quantum group in the sense of Kustermans and Vaes \cite{K&V}.  That article requires an additional technical hypothesis, namely that $\sA(\GG)$ admits an approximate unit $(e_n)$ such that both $(e_n)$ and $\sigma_{i/2}(e_n)$ converge to $1$ in the bornology of the multiplier algebra, and we shall impose this assumption here as well.  We do not know if this hypothesis is necessary, although it is easily checked in the natural examples, including all examples to be discussed here.  If ultimately, as we expect, this condition is shown to be unnecessary for obtaining the locally compact quantum group $C^u_r(\GG)$, then it can be removed from the present article as well.  All we require is that the $C^*$-completion $C^0_r(\GG)$ of $\sA(\GG)$ is a locally compact quantum group.

The dual reduced quantum group is denoted by $C^*_r(\mathbb{G})$, and $C_0^u(\mathbb{G})$ and $C^*_u(\mathbb{G})$ refer to the associated universal locally compact quantum groups. Further, $L^\infty(\mathbb{G})$ and $\mathcal{L}(\mathbb{G})$ refer to the associated von Neumann algebraic quantum groups.\\

\section{Closed quantum subgroups}
\label{closed}
\begin{defin}
A bornological quantum group $ \mathcal{A}(\mathbb{B})$, equipped with a bounded surjective $*$-morphism of bornological quantum groups $\pi : \mathcal{A}(\mathbb{G}) \rightarrow \mathcal{A}(\mathbb{B})$ is called a closed quantum subgroup of $ \mathcal{A}(\mathbb{G})$.
\end{defin}

Let $\mathcal{A}(\mathbb{B})$ be such a quantum subgroup with a left Haar integral $\phi_{\mathbb{B}}$. 

\begin{rmk}\label{rmk:hyp}
In general we have $\sigma_\BB(\pi(\delta_{\mathbb{G}}))=\mu\pi(\delta_{\mathbb{G}})$ for some complex number $\mu$ with modulus $1$. As for the scaling constant, we make the hypothesis here that this constant equals $1$. We thus have $\sigma_\BB(\pi(\delta_{\mathbb{G}}^{\frac{1}{2}}))=\pi(\delta_{\mathbb{G}}^{\frac{1}{2}}).$
\end{rmk}

The convolution algebras $\sD(\GG)$ and $\sD(\BB)$ are, by definition, identified as linear spaces with the spaces $\sA(\GG)$ and $\sA(\BB)$. Therefore the map $\pi : \sA(\GG)\to\sA(\BB)$ can also be seen as a map from $\sD(\GG)$ to $\sD(\BB)$. However, as it stands, this map does not have the properties of what we will call a \textit{generalized conditional expectation}. Instead, we first define
$$\gamma=\pi(\delta_{\mathbb{G}}^{-\frac{1}{2}})\delta_{\mathbb{B}}^{\frac{1}{2}} \in M(\mathcal{A}(\mathbb{B})),$$
which is a group-like element.
Then we modify the map $\pi$ to 
$$E:\mathcal{D}(\mathbb{G})\rightarrow \mathcal{D}(\mathbb{B}),~E(f)=\pi(f)\gamma.$$
In order to describe the relevant properties of $E$, we must start with some preliminaries concerning the action of $\sD(\BB)$ on $\sD(\GG)$.
We consider the dual morphism $\hat{\pi} : \mathcal{D}(\mathbb{B})\rightarrow M(\mathcal{D}(\mathbb{G}))$ and set for all $f$ in $\mathcal{D}(\mathbb{G})$ and for all $h\in \mathcal{D}(\mathbb{B})$,
\begin{align*}
    f\cdot h=f*\hat\pi(h\gamma).
\end{align*}
\begin{prop}\label{prop:action}
The map $f\mapsto f\cdot h$ defines a right action of the algebra $\mathcal{D}(\mathbb{B})$ on the space $\mathcal{D}(\GG)$.
\end{prop}
\begin{proof}
    Let $h,k\in \mathcal{D}(\BB)$. Since $\gamma$ is group-like we have $(h*k)\gamma=h\gamma*k\gamma$ and thus for $f\in\mathcal{D}(\GG)$ we have
    \begin{align*}
        f\cdot(h*k)&=f*\hat\pi(h\gamma)*\hat\pi(k\gamma)\\
        &=(f\cdot h)\cdot k.
    \end{align*}
\end{proof}
We are going to prove that $E$ preserves the *-involution and has a ``conditional expectation'' property with respect to this action.
\begin{lm}
The two multipliers $\delta_\BB$ and $\pi(\delta_\GG)$ commute.
\end{lm}
\begin{proof}We know that we have 
$$\phi_\BB(S(h))=\phi_\BB(h\delta_\BB),$$
for all $h\in \sA(\BB)$. By our hypothesis in Remark \ref{rmk:hyp} we also have that $\sigma_\BB(\pi(\delta_\GG^{-1}))=\pi(\delta_\GG^{-1})$.
    Let then $h\in\sA(\BB)$. We have $\phi_\BB(S(\pi(\delta_\GG)h))=\phi_\BB(S(h\pi(\delta_\GG)))$. On the one hand this gives
    \begin{align*}
        \phi_\BB(S(\pi(\delta_\GG)h))&=\phi_\BB(\pi(\delta_\GG)h\delta_\BB)\\
        &=\phi_\BB(h\delta_\BB\pi(\delta_\GG)),
    \end{align*}
    and on the other
    $$\phi_\BB(S(h\pi(\delta_\GG)))=\phi_\BB(h\pi(\delta_\GG)\delta_\BB). $$
    Therefore $\delta_\BB\pi(\delta_\GG)=\pi(\delta_\GG)\delta_\BB$.
\end{proof}
\begin{lm}
\label{lem:dual_to_restriction}
Let $h\in\sD(\HH)$.  The convolution multiplier $\hat{\pi}(h) \in M(\sD(\GG))$ is given by
 \begin{align*}
	  \hat{\pi}(h)*f &= \phi_{\mathbb{H}}(S^{-1}(\pi(f_{(1)}))h) f_{(2)}, \\
	  f*\hat{\pi}(h) &= f_{(1)}\, \phi_{\mathbb{H}}(\pi(\delta_{\mathbb{G}} S(f_{(2)}))h)
	    = f_{(1)}\, \phi_{\mathbb{H}} (S^{-1}(h)\pi(f_{(2)})\pi(\delta_{\mathbb{G}}^{-1})\delta_{\mathbb{H}}).
 \end{align*}
 for all $f\in\sD(\GG)$
\end{lm}
 \begin{proof}
 Let $f\in\mathcal{D}(\mathbb{G})$, $h\in \mathcal{D}(\mathbb{B})$ and $a\in \sA(\GG)$. Using the duality between $\sA(\GG)$ and $\sD(\GG)$ and taking into account the left invariance of $\phi_\GG$ and the definition of $\hat\pi$ we get
 \begin{align*}
     (\hat\pi(h)*f,a)&=(\hat\pi(h),a_{(1)})(f,a_{(2)})\\
     &=(h,\pi(a_{(1)}))(f,a_{(2)})\\
     &=\phi_\BB(\pi(a_{(1)})h)\phi_\GG(a_{(2)}f)\\
     &=\phi_\BB(\pi(S^{-1}(f_{(1)}))h)\phi_\GG(af_{(2)})\\
     &=(\phi_\BB(\pi(S^{-1}(f_{(1)}))h)f_{(2)},a).
 \end{align*}
Similarly, using this time the right relative invariance we get 
\begin{align*}
    (f*\hat\pi(h),a)&=(f,a_{(1)})(\hat\pi(h),a_{(2)})\\
     &=\phi_\GG(a_{(1)}f)\phi_\BB(\pi(a_{(2)})h)\\
     &=\phi_\GG(af_{(1)})\phi_\BB(\pi(\delta_\GG S(f_{(2)}))h)\\
     &=\phi_\GG(af_{(1)})\phi_\BB(S(S^{-1}(h)\pi(f_{(2)}\delta_\GG^{-1})))\\
     &=\phi_\GG(af_{(1)})\phi_\BB(S^{-1}(h)\pi( f_{(2)}\delta_\GG^{-1})\delta_\BB)\\
     &=(\phi_\BB(S^{-1}(h)\pi( f_{(2)})\pi(\delta_\GG^{-1})\delta_\BB)f_{(1)},a).
\end{align*}
 \end{proof}
 \begin{rmk}
 Note that since $\pi(\delta_\GG)$ and $\delta_\BB$ commute, we have $\pi(\delta_\GG^{-1})\delta_\BB=\gamma^2$.
 \end{rmk}

\begin{prop}\label{cond}
The map $E:\mathcal{D}(\mathbb{G})\rightarrow \mathcal{D}(\mathbb{B})$, $E(f)=\pi(f)\gamma$, has the two following properties :
\begin{enumerate}
    \item $E(f^*)=E(f)^*$, for all $f\in \mathcal{D}(\mathbb{G})$,
    \item $E(f\cdot h)=E(f)*h$. for all $f\in \mathcal{D}(\mathbb{G})$ and $h\in \mathcal{D}(\mathbb{B})$.
\end{enumerate}
The map $E$ is the \textit{generalized conditional expectation} we were looking to build.
\end{prop}

\begin{proof}
Let $f\in \mathcal{D}(\mathbb{G})$. We have
\begin{align*}
    E(f^*)&=E(\overline{S(f)}\delta_{\mathbb{G}})\\
    &=\overline{S(\pi(f))}\pi(\delta_{\mathbb{G}})\gamma\\
    &=\overline{S(\pi(f))}\pi(\delta_{\mathbb{G}}^{\frac{1}{2}})\delta_{\mathbb{B}}^{\frac{1}{2}}\\
    &=\overline{S(\pi(f)\gamma)}\delta_{\mathbb{B}}=E(f)^*.
    \\
    \end{align*}
    Now let $h\in \mathcal{D}(\mathbb{B})$. Using that $\sigma(\gamma^{-1})=\gamma^{-1}$ we get
    \begin{align*}
        E(f\cdot h)&=E(f*\hat\pi(h\gamma))\\
        &=(\id\hat\otimes\phi_\BB)((1\otimes \gamma^{-1}S^{-1}(h))(\pi\hat\otimes\pi)(\Delta(f))(1\otimes\gamma^2))\gamma\\
        &=(\id\hat\otimes\phi_\BB)((1\otimes S^{-1}(h))(\pi\hat\otimes\pi)(\Delta(f))(\gamma\otimes\gamma))\\
        &=E(f)*h.
    \end{align*}
\end{proof}

\section{The induction module $\mathcal{E}(\mathbb{G})$}
We now make the assumption that the quantum subgroup $\BB$ is amenable, that is, $C^*_u(\BB)=C^*_r(\BB)$. The goal of this section is to define a Hilbert $C^*(\BB)$-module with a left $C^*_u(\GG)$-action by completing $\sD(\GG)$. We equip the space $\mathcal{D}(\mathbb{G})$ with the right action of $\mathcal{D}(\mathbb{B})$ defined as in Proposition \ref{prop:action}.
\begin{defin}
Let $\sV$ be a right $\sD(\BB)$-module. A $\sD(\BB)$-valued inner product on $\sV$ will mean a sesquilinear map $\prodscal{\cdot}{\cdot} : \sV\times\sV\to \sD(\BB)$ such that for all $v,w\in \sV$ and $h\in\mathcal{D}(\mathbb{B})$ we have
\begin{enumerate}
    \item $\prodscal{v}{w\cdot h}=\prodscal{v}{w}*h$,
    \item $\prodscal{v}{w}^*=\prodscal{w}{v}$.
    \item $\lambda_\BB(\prodscal{v}{v})$ is a positive element of $C^*(\BB)$ and $\prodscal{v}{v}=0 \Leftrightarrow v=0.$
\end{enumerate}
Such a module $\sV$ endowed with a $\sD(\BB)$-valued inner product will be called a \emph{$\sD(\BB)$-inner product space}.
\end{defin}
\begin{rmk}
The fact that we have to call on the regular representation $\lambda_\BB$ is not very aesthetic but this is because the notion of positivity in the bornological quantum group $\sD(\BB)$ cannot be defined intrinsically. Below, we will prove positivity in the reduced $C^*$-algebra, but in fact we would want to prove positivity in the universal $C^*$-algebra. In the classical case, Rieffel \cite{Rieffel} uses the existence of a Bruhat section to prove positivity of the scalar product in the universal $C^*$-algebra. Because we do not have a suitable analogue of this in the quantum world we will content ourselves with the case of amenable quantum subgroups. Nonetheless we expect that the construction could be extended to any quantum subgroups.
\end{rmk}

\begin{prop}\label{prodscal} The sesquilinear map $\prodscal{~}{~ }_{\mathcal{D}(\mathbb{B})}$ defined for $f,\ g\in \mathcal{D}(\mathbb{G})$ by 
\[\prodscal{f}{g}_{\mathcal{D}(\mathbb{B})}=E(f^**g),\]
\\
defines a $\mathcal{D}(\mathbb{B})$-valued inner product.
\end{prop}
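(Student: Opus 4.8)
The plan is to verify the four defining properties of a (right) $\mathcal{D}(\mathbb{B})$-valued inner product on the right $\mathcal{D}(\mathbb{B})$-module $\mathcal{D}(\mathbb{G})$: sesquilinearity, $\mathcal{D}(\mathbb{B})$-linearity in the second variable, Hermitian symmetry, and positivity together with definiteness. Sesquilinearity is immediate, since $f\mapsto f^*$ is conjugate-linear, convolution is bilinear and $E$ is linear. The first three properties are formal consequences of Proposition \ref{cond} and of the associativity relations already checked when defining $\hat{\pi}$; the real content is positivity.

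For $\mathcal{D}(\mathbb{B})$-linearity I would use the compatibility $(f^**g)\cdot h = f^**(g\cdot h)$ (the identity $(g*f)\cdot h = g*(f\cdot h)$ verified above, with $g$ replaced by $f^*$) followed by Proposition \ref{cond}(2):
\[\prodscal{f}{g\cdot h}_{\mathcal{D}(\mathbb{B})} = E\big(f^**(g\cdot h)\big) = E\big((f^**g)\cdot h\big) = E(f^**g)*h = \prodscal{f}{g}_{\mathcal{D}(\mathbb{B})}*h.\]
For Hermitian symmetry I would combine Proposition \ref{cond}(1) with the $*$-algebra identity $(f^**g)^* = g^**f$ in $\mathcal{D}(\mathbb{G})$:
\[\prodscal{f}{g}_{\mathcal{D}(\mathbb{B})}^* = E(f^**g)^* = E\big((f^**g)^*\big) = E(g^**f) = \prodscal{g}{f}_{\mathcal{D}(\mathbb{B})}.\]

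The main obstacle is positivity: that $\prodscal{f}{f}_{\mathcal{D}(\mathbb{B})} = E(f^**f)$ is a positive element of the universal $C^*$-algebra $C^*(\mathbb{B})$. Since $f^**f$ is positive in $\mathcal{D}(\mathbb{G})\subset C^*(\mathbb{G})$, this amounts to showing that the weak conditional expectation $E$ is positive (indeed completely positive), which is exactly the quantum counterpart of Rieffel's key step. Concretely, I would test positivity against an arbitrary $*$-representation $\rho$ of $C^*(\mathbb{B})$ on a Hilbert space $H$ --- equivalently an arbitrary unitary corepresentation of $\mathbb{B}$, so that all representations are covered and positivity in $C^*(\mathbb{B})$ is detected --- and prove $\langle\eta,\rho(E(f^**f))\eta\rangle\geq 0$ for every $\eta\in H$. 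The mechanism is a completion of squares: using the corepresentation of $\mathbb{B}$ on $L^2(\mathbb{G})$ obtained from the restricted regular coaction $(\mathrm{id}\otimes\pi)\Delta$ corrected by $\gamma$, one builds an induced vector $\Phi_{f,\eta}$ and establishes an identity of the form $\langle\eta,\rho(E(f^**f))\eta\rangle = \|\Phi_{f,\eta}\|^2$, the quantum analogue of the classical squared norm $\int_{\mathbb{G}/\mathbb{B}}\|\Phi(x)\|^2$ on an induced space. The delicate point, and the reason the element $\gamma = \pi(\delta_{\mathbb{G}}^{-\frac{1}{2}})\delta_{\mathbb{B}}^{\frac{1}{2}}$ was introduced in the first place, is precisely that the modular factors must conspire so that this restricted coaction is a genuine \emph{unitary} corepresentation of $\mathbb{B}$; this is where the bulk of the computation, and all the modular bookkeeping with $\delta_{\mathbb{G}}$ and $\delta_{\mathbb{B}}$, will concentrate.

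Finally, definiteness is easy and separate from positivity: applying the counit $\epsilon_{\mathcal{A}(\mathbb{B})}$ and using that $\pi$ intertwines counits while $\gamma$ is group-like with trivial counit gives $\epsilon_{\mathcal{A}(\mathbb{B})}\circ E = \epsilon_{\mathcal{A}(\mathbb{G})}$ on $\mathcal{D}(\mathbb{G})$, whence
\[\epsilon_{\mathcal{A}(\mathbb{B})}\big(\prodscal{f}{f}_{\mathcal{D}(\mathbb{B})}\big) = \epsilon_{\mathcal{A}(\mathbb{G})}(f^**f) = \prodscal{f}{f}_{L^2(\mathbb{G})} = \|f\|_{L^2(\mathbb{G})}^2,\]
using the identity $\epsilon(f^**g) = \prodscal{f}{g}_{L^2(\mathbb{G})}$ recorded above. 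Since $\phi_{\mathbb{G}}$ is faithful, $\prodscal{f}{f}_{\mathcal{D}(\mathbb{B})} = 0$ forces $f = 0$, which completes the verification.
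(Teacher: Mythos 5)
Your algebraic verifications are correct and coincide with what the paper does: sesquilinearity is formal, $\mathcal{D}(\mathbb{B})$-linearity is $(f^**g)\cdot h=f^**(g\cdot h)$ followed by Proposition \ref{cond}(2), hermitian symmetry is $(f^**g)^*=g^**f$ followed by Proposition \ref{cond}(1), and your definiteness argument via $\epsilon_{\mathbb{B}}\circ E=\epsilon_{\mathbb{G}}$ and faithfulness of $\phi_{\mathbb{G}}$ uses exactly the identity $\epsilon(f^**g)=\prodscal{f}{g}_{L^2(\mathbb{G})}$ that the paper records and exploits. The paper dispatches all of this in one sentence after the statement, so up to this point you are aligned with it.

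The genuine gap is in the one step you yourself identify as carrying the whole proposition: positivity. Your argument stops at an announcement --- you never define the induced vector $\Phi_{f,\eta}$, never specify the space in which it lives, and never verify the claimed identity $\prodscal{\eta}{\rho(E(f^**f))\eta}=\|\Phi_{f,\eta}\|^2$; you explicitly defer ``the bulk of the computation.'' Moreover, as stated the plan is circular: the natural home for $\Phi_{f,\eta}$ is the induced module $\mathcal{E}(\mathbb{G})\otimes_{C^*(\mathbb{B})}H$, whose inner product is only defined once positivity is known. Rieffel escapes this classically because $C_c(G)$ lets him write the candidate squared norm as an explicit integral over $G/B$; in the quantum setting there is no underlying space, so one must produce a concrete operator model, and that is precisely what the paper's proof supplies. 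It runs your strategy for a single representation, the regular one: Proposition \ref{map} introduces $\rho_{\bullet}:\mathcal{D}(\mathbb{G})\rightarrow B(L^2(\mathbb{B}),L^2(\mathbb{G}))$, $\rho_f(\eta)=f\cdot\eta$, shows via $\epsilon_{\mathbb{B}}\circ E=\epsilon_{\mathbb{G}}$ that $\rho_f^*=\prodscal{f}{\cdot}_{\mathcal{D}(\mathbb{B})}$, and concludes $\prodscal{f}{g}_{\mathcal{D}(\mathbb{B})}=\rho_f^*\rho_g$ as operators on $L^2(\mathbb{B})$, whence positivity is immediate; your $\Phi_{f,\eta}$ is realized there simply as $f\cdot\eta\in L^2(\mathbb{G})$. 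To fill your gap you would in effect have to reconstruct these $\rho_f$ (your generic-representation version would then prove the stronger statement of positivity in the universal completion, which is a reasonable ambition, but the key adjoint identity still has to be computed, with all the modular bookkeeping in $\gamma$ that you correctly flag but do not carry out).
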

To prove the previous proposition we note first that the $\mathcal{D}(\mathbb{B})$-linearity and compatibility with the involution of the above sequilinear map follow immediately from Proposition \ref{cond}.
It only remains to check the strict positivity, which will be a consequence of Proposition \ref{map} below.
\begin{rmk}\label{rmk:prodscalexpress}
Let $f,\ g\in \mathcal{D}(\mathbb{G})$. We have
\begin{align*}
  E(f^**g)&=\phi_\GG(S^{-1}(g_{(1)})\overline{S(f)}\delta_\GG)\pi(g_{(2)})\gamma\\
  &=\phi_\GG(\bar{f}g_{(1)})\pi(g_{(2)})\gamma.
\end{align*}

\end{rmk}
\begin{rmk}
In what follows we will often use the maps $\Lambda_\GG : \sA(\GG)\to L^2(\GG)$, $\lambda_\GG : \sD(\GG)\to B(L^2(\GG))$ and the analogous maps $\Lambda_\BB$ and $\lambda_\BB$, but we will only write $\Lambda$ and $\lambda$. Their relation to $\BB$ or $\GG$ will depend on the context.
\end{rmk}

\begin{prop}\label{map}
The linear map $\rho_{\bullet} : \mathcal{D}(\mathbb{G})\rightarrow B(L^2(\mathbb{B}),L^2(\mathbb{G}))$ defined by $f\mapsto \rho_f$ where
$$\rho_f(\Lambda(\eta))=\Lambda(f\cdot\eta),\  \forall f\in \mathcal{D}(\mathbb{G}), \forall \eta\in \mathcal{D}(\mathbb{B}),$$

satisfies
$$\lambda_\BB(\prodscal{f}{g}_{\mathcal{D}(\mathbb{B})})=\rho_f^*\rho_g.$$
\end{prop}
\begin{proof}
First we claim that, as an operator from $L^2(\mathbb{G})$ to $L^2(\mathbb{B})$, $(\rho_f)^*$ acts on elements of $\Lambda(\sA(\GG))$ as
$$(\rho_f)^*~:~\Lambda(g)\mapsto\Lambda(\prodscal{f}{g}_{\mathcal{D}(\mathbb{B})}). $$
For this, note that using $\epsilon_{\mathbb{B}}(E(x))=\epsilon_{\mathbb{G}}(x)$, for any $x \in \mathcal{D}(\mathbb{G})$, we obtain $\epsilon_{\mathbb{B}}(E(x*y))=\epsilon_{\mathbb{G}}(x^**y)=\prodscal{x}{y}_{L^2(\mathbb{G})}$ for any $x,y\in \mathcal{D}(\mathbb{G})$. Therefore, for all $\eta\in \mathcal{A}(\mathbb{B})$ and $\xi\in  \mathcal{A}(\mathbb{G})$ we have
\begin{align*}
    \prodscal{\rho_f\Lambda(\eta)}{\Lambda(\xi)}_{L^2(\mathbb{G})}&=\prodscal{\Lambda(f\cdot\eta)}{\Lambda(\xi)}_{L^2(\mathbb{G})}\\
    &=\epsilon_{\mathbb{B}}(\prodscal{f\cdot\eta}{\xi}_{\mathcal{D}(\mathbb{B})})\\
    &=\epsilon_{\mathbb{B}}(\eta^**\prodscal{f}{\xi}_{\mathcal{D}(\mathbb{B})})\\
    &=\prodscal{\Lambda(\eta)}{\Lambda(\prodscal{f}{\xi})}_{L^2(\mathbb{B})}.
\end{align*}
We therefore have : 

\begin{align*}
  \Lambda( \prodscal{f}{g}_{\mathcal{D}(\mathbb{B})}*\eta) &= \Lambda(\prodscal{f}{g\cdot\eta}_{\mathcal{D}(\mathbb{B})})\\
   &=\rho_f^*\rho_g\Lambda(\eta) .
\end{align*}
\end{proof}
This concludes the proof of Proposition \ref{prodscal}. We also record the explicit formula
$$ \prodscal{f}{g}_{\mathcal{D}(\mathbb{B})}=(\phi_{\mathbb{G}}\hat\otimes\id)((\bar{f}\otimes 1)\Delta(g)(1\otimes\gamma)).$$
\begin{defin}
The Hilbert  $C^*(\BB)$-module obtained by completing $\mathcal{D}(\mathbb{G})$ with respect to the inner product above is denoted $\mathcal{E}(\mathbb{G})$ and we call it the induction module (associated to $\mathbb{B}$).
\end{defin}
See \cite{lance} for details about the completion. The space $\mathcal{E}(\mathbb{G})$ is innately equipped with a left $C^*_u(\mathbb{G})$-action, which commutes with the right $C^*(\BB)$-action. We then get our induction  bi-module
$$_{C^*_u(\mathbb{G})}\mathcal{E}(\mathbb{G})_{C^*(\BB)}.$$
Now, for $\alpha$ a representation of $C^*(\BB)$ on an $A$-Hilbert module  $K$ (where $A$ is any $C^*$-algebra) we consider, following Rieffel's definition for induced representations in \cite{Rieffel}, the $A$-Hilbert module 
$$\text{Ind}_{\mathbb{B}}^{\mathbb{G}}V=\mathcal{E}(\mathbb{G})\hat\otimes_{C^*(\BB)}V,$$
where the tensor product is completed with respect to the interior inner product \cite[Proposition 4.5]{lance}.

\section{Link with Vaes' approach to induction}

We consider in this Section our bornological quantum groups $\GG$ and $\BB$ as locally compact quantum groups, as described at the end of Section \ref{sec:BQG}, and we {assume} that $\BB$ is a closed quantum subgroup of $\GG$ in the sens of the following definition.
\begin{defin}
	\label{def:Vaes_subgroup}
	Let $\pi : C_0^u(\mathbb{G})\rightarrow M(C_0^u(\mathbb{B}))$ be an homomorphism. We say that $\pi$ identifies $\mathbb{B}$ as a \emph{closed quantum subgroup of $\mathbb{G}$ in the sense of Vaes} if there exists a faithful, normal, unital $*$-homomorphism $\hat\pi :  \mathcal{L}(\mathbb{B})\rightarrow \mathcal{L}(\mathbb{G})$ such that the following diagram commutes :
	\[
	 \xymatrix{
	  C^*_u(\mathbb{B}) \ar[r]^{\hat\pi} \ar[d]_{\lambda_{\mathbb{B}}} &
	   M(C^*_u(\mathbb{G})) \ar[d]^{\lambda_{\mathbb{G}}} \\
	  \mathcal{L}(\mathbb{B}) \ar[r]^{\hat\pi} &
	   \mathcal{L}(\mathbb{G})
   }
	\]
	where the vertical maps are the regular representations.
\end{defin}

\begin{rmk}
This definition is the notion used by Vaes in this work on induction for locally compact quantum groups \cite{Vaes}. There is another potentially weaker definition of closed quantum group that can be found in \cite{Daws}.
\end{rmk}


Let us summarize the induction procedure of \cite{Vaes} for locally compact quantum groups. We begin with some definitions and results from \cite[Section 3]{Vaes}. We consider $(A,\Delta)$ a locally compact quantum group with von Neumann algebra $M$ and GNS Hilbert space $H$. We also fix a $C^*$-algebra $B$. If $\sV$ is a $C^*$-B-module we write $\sL(\sV)$ for the $*$-algebra of adjointable $B$-linear operators.
\begin{defin}
Let $N$ be a von Neumann algebra and $\sV$ a $C^*$-$B$-module. A unital *-homomorphism $\beta : N\to\sL(\sV) $ is said to be strict (or normal) if it is strong* continuous on the unit ball of $N$.
\end{defin}
\begin{defin}
Let $M$ and $N$ be von Neumann algebras. We say that a $C^*$-$B$-module $\sV$ is a $B$-correspondence from $N$ to $M$ if we have
\begin{itemize}
    \item a strict *-homomorphism $\beta_l : M\to \sL(\sV)$,
    \item a strict *-antihomomorphism $\beta_r : N\to \sL(\sV)$,
    such that $\beta_l(M)$ and $\beta_r(N)$ commute.
\end{itemize}
\end{defin}
\begin{rmk}
In \cite{Vaes} the notation $\pi$ is used instead of $\beta$. Here we keep $\pi$ to designate the morphism from $\sA(\GG)$ to $\sA(\BB)$.
\end{rmk}
We will denote $x\cdot v=\beta_l(x)v$ and $v\cdot y=\beta_r(y)v$ for all $x\in M$, $y\in N$ and $v\in\sV$ and this correspondence will be denoted as $_M\boxed{\sV}\vphantom{ }_N$.
\begin{prop}\emph{(\cite[Proposition 3.4]{Vaes})}.\label{prop:regtocor}
 Let $X\in \sL(A\hat\otimes\sV)$ be a unitary corepresentation on a $C^*$-$B$-module $\sV$. There is a $B$-correspondence  $_{\hat{M}}\boxed{H\hat\otimes\sV}\vphantom{ }_{\hat{M}}$ given by 
\[ x\cdot v = X(x\otimes 1)X^*v~\text{and}~v\cdot y=(\hat{J}_\GG y^*\hat{J}_\GG\otimes1)v~\text{for}~x,y\in\hat{M},~v\in H\hat\otimes\sV.\]
\end{prop}
\begin{defin}\emph{(\cite[Definition 3.5]{Vaes})}.
Let $_{\hat{M}}\boxed{\mathcal{F}}\vphantom{ }_{\hat{M}}$ be a $B$-correspondence from $\hat{M}$ to $\hat{M}$ and suppose $\beta : M'\to \sL(\mathcal{F})$ is a strict *-homomorphism. We say that $\beta$ is bicovariant when
\begin{align*}
(\beta_l\hat\otimes\id)(\hat\Delta(x))&=(\beta\hat\otimes\id)(\hat{V})(\beta_l(x)\otimes1)(\beta\hat\otimes\id)(\hat{V}^*)~\text{ and }\\(\beta_r\hat\otimes\hat{R})(\hat\Delta(x))&=(\beta\hat\otimes\id)(\hat{V})(\beta_l(x)\otimes1)(\beta\hat\otimes\id)(\hat{V}^*),
\end{align*}
where $\hat{V}=(J\hat\otimes J)W(J\hat\otimes J)$ and $\hat{R}$ denotes the unitary antipode of $\hat{M}$, see \cite[Preliminaries]{Vaes}. In this case we call $\mathcal{F}$ a bicovariant $B$-correspondence and we write $\overset{M'}{_{\hat{M}}\boxed{\mathcal{F}} _{\hat{M}}}$.
\end{defin}
\begin{rmk}\label{rmk:bicov}
We give this definition because we will need to deal with bicovariant $B$-correspondences. However its technical aspect does not concern us directly. The core of this section is to show the equivalence between two different bicovariant $B$-correspondences, where their structure is already provided by the results of \cite{Vaes}. Showing such an equivalence is simply a matter of showing that the morphisms satisfy the right commutation relations.
\end{rmk}
According to \cite[Remark 3.6]{Vaes}, we have a structure of bicovariant $B$-correspondence $\overset{M'}{_{\hat{M}}\boxed{H\hat\otimes\mathcal{V}} _{\hat{M}}}$ where the $B$-correspondence is given by Proposition \ref{prop:regtocor} and $\beta : M'\to \sL(H\hat\otimes\mathcal{V})$ is given by $\beta(x)=x\otimes 1$.
\begin{rmk}
It should be noted that there is a slight difference in conventions between the current work and the article of Vaes. Namely the skew-pairing between $\sA(\hat{\GG})$ and $\sA(\GG)$ is such that the coproduct on $\sA(\hat{\GG})$ is reversed in our conventions, while it is the multiplication which is reversed in Vaes’ conventions. Given that the modules discussed here are defined primarily in terms of $\sD(\GG)$-actions, this means that the action of the function algebra $M’ = L^\infty(\GG)’$ in the bicovariant modules we define below will be intertwined by the unitary antipode $R$. This forces us to slightly modify the definition of the morphism $\beta$ so that $\beta(x)=R(Jx^*J)\otimes 1$. 

In practice, this means the following.  If $a\in\sA(\GG)$ then the action of $m’(a)\in M’$ on the GNS space $H=L^2(\GG)$ in our conventions needs to be defined as
\[
  m’(a)\cdot \Lambda(\xi) = \Lambda(R(a)\xi),
\]
where $\xi\in\sA(\GG)$ and $R$ designates the unitary antipode of $M$. The fact that $R$ stabilizes the bornological algebra $\mathcal{A}(\mathbb{G})$ is a consequence of \cite{RY}. 
\end{rmk}
The following proposition is crucial to Vaes' induction procedure.  It will be the key result that we use to establish the equivalence between our approach to induction and Vaes'.
\begin{prop}\emph{(\cite[Proposition 3.7]{Vaes})}\label{prop:vaes}
If $\overset{M'}{_{\hat{M}}\boxed{\mathcal{F}} _{\hat{M}}}$ is a bicovariant $B$-correspondence, there exists a canonically determined $C^*$-$B$-module $\sE$ and a corepresentation $X\in\sL(A\hat\otimes\sE)$, unique up to equivalence, such that 
$$\overset{M'}{_{\hat{M}}\boxed{\mathcal{F}} _{\hat{M}}}\cong \overset{M'}{_{\hat{M}}\boxed{H\hat\otimes\sE} _{\hat{M}}} $$
as bicovariant correspondences. So, we get a bijective relation between unitary corepresentations on $C^*$-$B$-module and bicovariant $B$-correspondences.
\end{prop}
Note that from the corepresentation $X\in \sL(A\hat\otimes\sV)$, we obtain a *-morphism $\alpha : \hat{A}^u\to\sL(\sV)$ which verifies
 $$(\id\hat\otimes\alpha)(W^u)=X, $$
 where $W^u$ designates the universal multiplicative unitary of the quantum group $(A,\Delta)$.
 
 We now set $A=C_0^r(\GG)$ and thus we have $H=L^2(\GG)$, $M=L^\infty(\GG)$ and $\hat{M}=\sL(\GG)$. Let $X\in \sL(C^r_0(\GG)\hat\otimes\sV)$ be a corepresentation of $\mathbb{G}$ on a Hilbert $B$-module $\sV$. We still denote by $\alpha$ the corresponding $*$-morphism $\alpha : C^*_u(\mathbb{G})\rightarrow \sL(\sV)$, as well as its bornological version, $\alpha : \sD(\BB)\to \sL(\sV)$, which can be defined by restriction of the original $\alpha$ to $\lambda^u(\sD(\GG))$.
 

 \begin{rmk}\label{rmk:action}
 One can describe explicitly the structure of the bicovariant $B$-correspondence $ \overset{L^\infty(\GG)'}{_{\sL(\GG)}\boxed{L^2(\GG)\hat\otimes\sV}\vphantom{ }_{\sL(\GG)}}$. Let $f\in \mathcal{D}(\mathbb{G})$,~ $\xi\in\sA(\mathbb{G})$ and $v\in \sV$. We have
\begin{itemize}
    \item $\lambda(f)\cdot(\Lambda(\xi)\otimes v) =(\lambda\otimes \alpha)(\hat{\Delta}(f))(\Lambda(\xi)\otimes v)$,
    \item $(\Lambda(\xi)\otimes v)\cdot \lambda'(f)=\Lambda(\xi*f)\otimes v$,
    \item $\beta(m'(f))(\Lambda(\xi)\otimes v)=\Lambda(R(f)\xi)\otimes v$.
\end{itemize}
Let us remark that if our conventions were coherent with those of Vaes we would have a flipped coproduct $\hat{\Delta}^{\mathrm{op}}$ in first point. This is because in Proposition \ref{prop:regtocor}, the left action is defined by $x\cdot v = X(x\otimes 1)X^*v$, for $x\in\hat{M},~v\in H\hat\otimes\sV$ and we have $W(x\otimes 1)W^*=\hat{\Delta}^{\mathrm{op}}(x)$.
 \end{rmk}



From now we consider $X\in \sL(C^r_0(\BB)\hat\otimes\sV)$ a corepresentation of $\mathbb{B}$ on a Hilbert $B$-module $\sV$, accompanied by the $*$-morphism $\alpha : C^*(\BB)\rightarrow \sL(\sV)$. The aim of the next paragraphs is to build the induced corepresentation of $\sV$ with Vaes' technique. Following \cite[Lemma 4.5]{Vaes} we consider the $B$-correspondence ${_{\sL(\BB)}\boxed{L^2(\GG)\hat\otimes\sV}\vphantom{ }_{\sL(\GG)}}$.
\begin{rmk}\label{rmk:action_on_the_false}
The morphisms in this structure of $B$-correspondence can be made explicit as is the previous remark. Let $f\in \mathcal{D}(\mathbb{G})$, $h\in\sD(\BB)$,~ $\xi\in\sA(\mathbb{G})$ and $v\in \sV$. We have
\begin{itemize}
    \item $\lambda(h)\cdot(\Lambda(\xi)\otimes v) =(\lambda\circ\hat{\pi}\otimes \alpha)(\hat{\Delta}(h))(\Lambda(\xi)\otimes v)$,
    \item $(\xi\otimes v)\cdot \lambda'(f)=(\xi*f)\otimes v$.
\end{itemize}
The second point does not differ from the formula in Remark \ref{rmk:action}. The first point requires justification. It is claimed in \cite[Lemma 4.5]{Vaes} that the morphism $\beta_l : \sL(\BB)\to \sL(L^2(\GG)\hat\otimes\sV)$ is characterized by the property 
$$\beta_l(a)(u\otimes1)\xi = (u \otimes1)X(a\otimes1)X^*\xi,$$
for every $a\in\sL(\BB)$, $\xi\in L^2(\BB)\hat\otimes\sV$ and $u\in B(L^2(\BB),L^2(\GG))$ satisfying $ux=\hat{\pi}(x)u$ for all $x\in \sL(\BB)$. Let then $u\in B(L^2(\BB),L^2(\GG))$ satisfying $ux=\hat{\pi}(x)u$ for all $x\in \sL(\BB)$ and let $h\in\sD(\BB)$, $\eta\in \sA(\BB)$ and $v\in \sV$. According to Remark \ref{rmk:action} we have
$$X(a\otimes1)X^*(\Lambda_\BB(\eta)\otimes v)=(\lambda_\BB\otimes \alpha)(\hat{\Delta}(h))(\Lambda_\BB(\xi)\otimes v).$$
Therefore, since $\lambda(h)\in \sL(\BB)$, we have
\begin{align*}
    [(u \otimes1)X(\lambda(h)\otimes1)X^*](\Lambda(\xi)\otimes v)&=(u \otimes1)((\lambda\otimes \alpha)(\hat{\Delta}(h)))(\Lambda(\xi)\otimes v)\\
    &=(\hat{\pi}\circ\lambda\otimes \alpha)(\hat{\Delta}(h))(u \otimes1)(\Lambda(\xi)\otimes v)\\
    &=(\lambda\circ\hat{\pi}\otimes \alpha)(\hat{\Delta}(h))(u \otimes1)(\Lambda(\xi)\otimes v)\\
    &=(\lambda\circ\hat{\pi}\otimes \alpha)(\hat{\Delta}(h))[(u \otimes1)(\Lambda(\xi)\otimes v)]
\end{align*}
and thus it coincides with what we claimed. Finally we note that we also have a *-morphism $\beta_{L^2(\mathbb{G})\hat\otimes \sV} : L^\infty(\GG)'\to \sL(L^2(\mathbb{G})\hat\otimes \sV)$ given by $\beta_{L^2(\mathbb{G})\hat\otimes \sV}(m'(f))(\Lambda(\xi)\otimes v)=\Lambda(R(f)\xi)\otimes v$.
\end{rmk}

We introduce the space $\mathcal{I}$ from \cite[Definition 4.2]{Vaes}:
$$\mathcal{I}=\{u \in B(L^2(\mathbb{B}),L^2(\mathbb{G})),\ ux=\hat{\pi}'(x)u\   \forall x\in \mathcal{L}(\mathbb{B})'\},$$
 where $\hat{\pi}'$ refers to the natural action of $\mathcal{L}(\mathbb{B})'$ on $L^2(\mathbb{G})$ given by 
$$\hat{\pi}'(x)=\hat{J}_{\mathbb{G}}\hat{\pi}(\hat{J}_{\mathbb{B}}x\hat{J}_{\mathbb{B}})\hat{J}_{\mathbb{G}}.$$
the space $\mathcal{I}$ is endowed with 
\begin{itemize}
    \item its natural $\sL(\GG)$ left action by composition,
    \item its natural $\sL(\BB)$ right action by composition,
    \item an $\sL(\BB)$-inner product given by $\prodscal{u}{v}_{\sL(\BB)}=u^*v,$ for all $u,v\in \mathcal{I}$.
    \item a *-morphism $\beta_{\mathcal{I}} : L^\infty(\GG)'\to \sL(\mathcal{I})$ given by $\beta_{\mathcal{I}}(m'(f))u=m(R(f))u$, for all $f\in \sA(\GG)$ and $u\in \mathcal{I}$.
\end{itemize}
With this structure the space $\mathcal{I}$ is a bicovariant $W^*$-bimodule (see \cite[Section 3.2]{Vaes}). 

Let $K$ be a $B$-Hilbert module endowed with a left $\sL(\BB)$-action. One can consider the space $\mathcal{I}\otimes_{\sL(\BB)}K$, which is a $B$-Hilbert module when it is endowed with the \textit{interior inner product} (\cite[Section 12.3]{Vaes}) as follows. Let $u,v\in \mathcal{I}$ and $x,y\in K$. The interior tensor product is given by 
$$\prodscal{u\otimes x}{v\otimes y}_B=\prodscal{x}{\prodscal{u}{v}_{\sL(\BB)}\cdot y}_B. $$

Now, following Vaes' induction procedure, we set $K=L^2(\GG)\hat\otimes\sV$, as in Remark \ref{rmk:action_on_the_false}. Vaes builds a bicovariant $B$-correspondence 
$$\overset{L^\infty(\GG)'}{\hphantom{\boxed{I}}_{\sL(\GG)}\boxed{\mathcal{I}\otimes_{\mathcal{L}(\mathbb{B})}(L^2(\mathbb{G})\hat\otimes \sV)}\vphantom{ }_{\sL(\GG)}}.$$
 \begin{rmk}\label{rmk:explicit}
 On this balanced tensor product, the left action of $\sL(\GG)$ is done via the left action of $\sL(\GG)$ on $\mathcal{I}$. The right action  of $\sL(\GG)$ via its right action on $(L^2(\mathbb{G})\hat\otimes \sV)$, as specified in Remark \ref{rmk:action_on_the_false}. Finally the morphism $\beta : L^\infty(\GG)'\to \sL(\mathcal{I}\otimes_{\mathcal{L}(\mathbb{B})}(L^2(\mathbb{G})\hat\otimes \sV))$ is given by $\beta=(\beta_{\mathcal{I}}\hat\otimes \beta_{L^2(\mathbb{G})\hat\otimes \sV})\circ\Delta$. Specifically, let $f\in\sD(\GG), \xi\in\sA(\GG)$ and $v\in \sV$. We have 
\begin{itemize}
\item $\lambda(g)\cdot(\iota(f)\otimes \Lambda(\xi)\otimes v)=\iota(g*f)\otimes \Lambda(\xi)\otimes v,$
\item $(\iota(f)\otimes \Lambda(\xi)\otimes v)\cdot \lambda'(g) =\iota(f)\otimes \Lambda(\xi*g)\otimes v,$
\item $\beta(m'(g))(\iota(f)\otimes \Lambda(\xi)\otimes v)=\iota(R(g_{(2)})f)\otimes \Lambda(R(g_{(1)})\xi)\otimes v.$
\end{itemize}
 \end{rmk}
Then, using Proposition \ref{prop:vaes} we have the existence of a corepresentation  of $C_0^r(\GG)$ on a $B$-Hilbert module $\mathrm{Ind}~\sV$ such that there is an isomorphism of $B$-correspondences
$$ \overset{L^\infty(\GG)'}{_{\sL(\GG)}\boxed{\mathcal{I}\hat\otimes_{\mathcal{L}(\mathbb{B})}(L^2(\mathbb{G})\otimes \sV)}\vphantom{ }_{\sL(\GG)}}\cong \overset{L^\infty(\GG)'}{_{\sL(\GG)}\boxed{L^2(\GG)\hat\otimes\mathrm{Ind}~\sV}\vphantom{ }_{\sL(\GG)}}.$$ 
The aim of this section is therefore to establish an equivalence of corepresentations
$$\mathrm{Ind}~\sV \cong \mathcal{E}(\mathbb{G})\hat\otimes_{C^*(\BB)}\sV.$$

According to Proposition \ref{prop:regtocor}, there exists a structure of bicovariant $B$-correspondence
$\overset{L^\infty(\GG)'}{_{\sL(\GG)}\boxed{L^2(\mathbb{G})\otimes \mathcal{E}(\mathbb{G})\otimes_{\sD(\BB)}\sV}\vphantom{ }_{\sL(\GG)}}$. Specifically, let $g\in\sD(\GG)$, $\xi,f\in \sA(\GG)$ and $v\in \sV$. We have 
\begin{itemize}
\item $\lambda(g)\cdot(\Lambda(\xi)\otimes f\otimes v)=(\Lambda\otimes\id)(\hat{\Delta}(g)*(\xi\otimes f))\otimes v,$ \\
where $*$ refers to the product of $\sD(\GG)\hat\otimes \sD(\GG),$
\item $(\Lambda(\xi)\otimes f\otimes v)\cdot \lambda'(g) =\Lambda(\xi*g)\otimes f \otimes v,$
\item $\beta(m'(g))(\Lambda(\xi)\otimes f\otimes v)=\Lambda(R(g)\xi)\otimes f\otimes v.$
\end{itemize}
\begin{prop}\label{equiv} We have an equivalence of bicovariant $B$-correspondences
$$\overset{L^\infty(\GG)'}{_{\sL(\GG)}\boxed{\mathcal{I}\hat\otimes_{\mathcal{L}(\mathbb{B})}(L^2(\mathbb{G})\hat\otimes \sV)}\vphantom{ }_{\sL(\GG)}}\cong  \overset{L^\infty(\GG)'}{_{\sL(\GG)}\boxed{L^2(\mathbb{G})\hat\otimes\mathcal{E}(\mathbb{G})\hat\otimes_{C^*(\BB)} \sV}\vphantom{ }_{\sL(\GG)}}. $$
\end{prop}
\justify To prove this we need several results.
\begin{lm}\label{lm:multiplier}
Let $h\in \mathcal{D}(\mathbb{B})$. We have that $\hat\pi(h)\delta_{\mathbb{G}}^{\frac{1}{2}}$ is a well defined element of $M(\mathcal{D}(\mathbb{G}))$ and we have $\hat\pi(h)\delta_{\mathbb{G}}^{\frac{1}{2}}=\hat\pi(h\pi(\delta_{\mathbb{G}}^{\frac{1}{2}})).$
\end{lm}
\begin{proof}
First, observe that, since $\delta_{\mathbb{G}}^{\frac{1}{2}}$ is group-like,  $f\mapsto f\delta_{\mathbb{G}}^{\frac{1}{2}}$ is a bijective homomorphism of the algebra $\mathcal{D}(\mathbb{G})$. As a consequence this map extends to a map $M(\mathcal{D}(\mathbb{G}))\rightarrow M(\mathcal{D}(\mathbb{G}))$ defined for $m\in M(\mathcal{D}(\mathbb{G}))$ and $f\in \mathcal{D}(\mathbb{G})$ by
$$(m\delta_{\mathbb{G}}^{\frac{1}{2}})*f=(m*(f\delta_{\mathbb{G}}^{-\frac{1}{2}}))\delta_{\mathbb{G}}^{\frac{1}{2}}.$$
Then, setting $m=\hat\pi(h)$, we get
\begin{align*}
    (\hat\pi(h)\delta_{\mathbb{G}}^{\frac{1}{2}})*f&=(\hat\pi(h)*(f\delta_{\mathbb{G}}^{-\frac{1}{2}}))\delta_{\mathbb{G}}^{\frac{1}{2}}\\
&=\phi_\mathbb{B}(\pi(S^{-1}(f_{(2)}\delta_{\mathbb{G}}^{-\frac{1}{2}}))h)f_{(1)}\delta_{\mathbb{G}}^{-\frac{1}{2}}\delta_{\mathbb{G}}^{\frac{1}{2}}\\
&=\phi_\mathbb{B}(\pi(S^{-1}(f_{(2)}))h\sigma_\BB(\pi(\delta_{\mathbb{G}}^{\frac{1}{2}})))f_{(1)}\\
&=\hat\pi(h\pi(\delta_{\mathbb{G}}^{\frac{1}{2}}))*f
\end{align*}
where the last equality follows from the hypothesis we made in Remark \ref{rmk:hyp} which gives  $\sigma_{\mathbb{B}}(\pi(\delta_{\mathbb{G}}^{\frac{1}{2}}))=\pi(\delta_{\mathbb{G}}^{\frac{1}{2}})$.
\end{proof}

\begin{lm}
Let $h\in \mathcal{D}(\mathbb{B})$ and $\xi\in \mathcal{D}(\mathbb{G})$. We have that $\hat{\pi}'(\lambda'(h))\Lambda(\xi)=\Lambda(\xi)\cdot h.$
\end{lm}
\begin{proof}
Let $h\in \mathcal{D}(\mathbb{B})$. We recall that we have the polar decomposition of the operator $\hat{T}_\BB : \Lambda(f)\mapsto \Lambda(f^*)$ as $\hat{T}_\BB=\hat{J}_\BB\hat{\nabla}_{\mathbb{B}}^{\frac{1}{2}}=\hat{\nabla}_{\mathbb{B}}^{-\frac{1}{2}}\hat{J}_\BB$, so
$$\hat{J}_{\mathbb{B}}\lambda'(h)\hat{J}_{\mathbb{B}}=\hat{\nabla}_{\mathbb{B}}^{\frac{1}{2}}\lambda'(h^*)\hat{\nabla}_{\mathbb{B}}^{-\frac{1}{2}}.$$
 Recall also that $\hat{\nabla}_{\mathbb{B}}\Lambda(\eta)=\Lambda(S^2(\eta)\delta_{\mathbb{B}}^{-1})$. We thus have
$$\hat{J}_{\mathbb{B}}\lambda'(h)\hat{J}_{\mathbb{B}}=\lambda(|S|(h)^*\delta_{\mathbb{B}}^{\frac{1}{2}}), $$
where $|S|$ is the automorphism introduced in Proposition \ref{prop:sbar}.
Of course the same result stay true if we replace $\BB$ by $\GG$. We use in the next calculation that the  automorphisms $|S|$ are intertwined by $\hat\pi$ and that $|S|(\delta_{\mathbb{B}})=\delta_{\mathbb{B}}.$ We also have
\begin{align*}
(h\delta_{\mathbb{B}}^{\frac{1}{2}})^*&=\overline{S(h\delta_{\mathbb{B}}^{\frac{1}{2}})}\delta_{\mathbb{B}}\\
&=h^*\delta_{\mathbb{B}}^{-\frac{1}{2}}.
\end{align*}
One can now calculate

\begin{align*}
    \hat{\pi}'(\lambda'(h))&=\hat{J}_{\mathbb{G}}\hat{\pi}(\lambda(|S|(h)^*\delta_{\mathbb{B}}^{-\frac{1}{2}}))\hat{J}_{\mathbb{G}}\\
    &=\hat{J}_{\mathbb{G}}\hat{\pi}(\lambda(|S|(h\delta_{\mathbb{B}}^{\frac{1}{2}})^*))\hat{J}_{\mathbb{G}}\\
    &=\hat{J}_{\mathbb{G}}\lambda(|S|(\hat{\pi}(h\delta_{\mathbb{B}}^{\frac{1}{2}}))^*)\hat{J}_{\mathbb{G}}\\
    &=\hat{J}_{\mathbb{G}}\lambda(|S|(\hat{\pi}(h\delta_{\mathbb{B}}^{\frac{1}{2}}))^*\delta_{\mathbb{G}}^{\frac{1}{2}}\delta_{\mathbb{G}}^{-\frac{1}{2}})\hat{J}_{\mathbb{G}}\\
    &=\hat{J}_{\mathbb{G}}\lambda(|S|(\hat{\pi}(h\delta_{\mathbb{B}}^{\frac{1}{2}})\delta_{\mathbb{G}}^{-\frac{1}{2}})^*\delta_{\mathbb{G}}^{-\frac{1}{2}})\hat{J}_{\mathbb{G}}\\
    &\overset{(*)}{=}\hat{J}_{\mathbb{G}}\lambda(|S|(\hat{\pi}(h\gamma)^*\delta_{\mathbb{G}}^{-\frac{1}{2}})\hat{J}_{\mathbb{G}}\\
    &=\hat{J}_{\mathbb{G}}\hat{J}_{\mathbb{G}}\lambda'(\hat{\pi}(h\gamma))\hat{J}_{\mathbb{G}}\hat{J}_{\mathbb{G}}\\
    &=\lambda'(\hat{\pi}(h\gamma^{-1})).
\end{align*}
Where for $(*)$ we used Lemma \ref{lm:multiplier}. The result follows from the definition of the right action.
\end{proof}

\begin{lm}\label{injection}
The map $\rho_{\bullet}$ from Proposition \ref{map} defines an injection with dense image $\mathcal{E}(\mathbb{G})\rightarrow  \mathcal{I}$ (with respect to the weak topology of $B(L^2(\mathbb{B}),L^2(\mathbb{G})$). Its image is denoted $\mathcal{I}_0$.
\end{lm}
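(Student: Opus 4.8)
The plan is to establish three things in turn: that $\rho_\bullet$ takes values in $\mathcal{I}$, that it is isometric (hence injective) for the Hilbert-module norm, and that its image $\mathcal{I}_0$ is weakly dense in $\mathcal{I}$; the last assertion carries the real content. For membership, recall that the commutant $\mathcal{L}(\mathbb{B})'$ is the ultraweak closure of the right convolution operators $r_\mathbb{B}(g):\eta\mapsto\eta*g$, $g\in\mathcal{D}(\mathbb{B})$, and that $\hat\pi'$ sends $r_\mathbb{B}(g)$ to the right convolution $r_\mathbb{G}(\hat\pi(g))$ on $L^2(\mathbb{G})$, compatibly with the commuting square of the previous section. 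The associativity $f\cdot(\eta*g)=(f\cdot\eta)\cdot g=(f\cdot\eta)*\hat\pi(g)$ then reads $\rho_f\,r_\mathbb{B}(g)=\hat\pi'(r_\mathbb{B}(g))\,\rho_f$ on $\mathcal{D}(\mathbb{B})$; as both sides are ultraweakly continuous in $x=r_\mathbb{B}(g)$ (with $\rho_f$ fixed and $\hat\pi'$ normal), the intertwining extends to all of $\mathcal{L}(\mathbb{B})'$, so $\rho_f\in\mathcal{I}$. Injectivity and continuity are then immediate from Proposition \ref{map}: $\|\rho_f\|^2=\|\rho_f^*\rho_f\|=\|\langle f,f\rangle_{\mathcal{D}(\mathbb{B})}\|$ is exactly the square of the module norm of $f$ once $C^*(\mathbb{B})$ acts faithfully on $L^2(\mathbb{B})$, so $\rho_\bullet$ is isometric and extends to an isometric embedding of $\mathcal{E}(\mathbb{G})$ with norm-closed image $\mathcal{I}_0$ contained in the weakly closed space $\mathcal{I}$.

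For density I would first record that $\mathcal{I}_0$ is a sub-bimodule: the relations $g*(f\cdot\eta)=(g*f)\cdot\eta$ and $f\cdot(g*\eta)=(f\cdot g)\cdot\eta$ already used to define $\hat\pi$ give $\lambda_\mathbb{G}(g)\rho_f=\rho_{g*f}$ and $\rho_f\lambda_\mathbb{B}(g)=\rho_{f\cdot g}$, so the weak closure $\mathcal{M}=\overline{\mathcal{I}_0}^{\,w}$ is a weakly closed $\mathcal{L}(\mathbb{G})$--$\mathcal{L}(\mathbb{B})$ sub-bimodule of $\mathcal{I}$. Next, exactly as in the proof of Proposition \ref{map}, for $u,v\in\mathcal{I}$ one checks that $u^*v$ commutes with $\mathcal{L}(\mathbb{B})'$, so $u^*v\in\mathcal{L}(\mathbb{B})$ and $\mathcal{I}$ is a right Hilbert $\mathcal{L}(\mathbb{B})$-module for $\langle u,v\rangle=u^*v$; being weakly closed in $B(L^2(\mathbb{B}),L^2(\mathbb{G}))$ it is self-dual. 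By Paschke's theorem $\mathcal{M}$ is orthogonally complemented, so it suffices to show $\mathcal{M}^\perp=0$. Any $u\in\mathcal{M}^\perp$ satisfies $\rho_f^*u=0$ for all $f$, i.e. $\langle u\eta,\,f\cdot\zeta\rangle_{L^2(\mathbb{G})}=0$ for all $f\in\mathcal{D}(\mathbb{G})$ and $\eta,\zeta\in\mathcal{D}(\mathbb{B})$; since $f\cdot\zeta$ runs through $\mathcal{D}(\mathbb{G})\cdot\mathcal{D}(\mathbb{B})$, which is dense in $L^2(\mathbb{G})$, we get $u\eta=0$ for every $\eta$ and hence $u=0$. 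Thus $\mathcal{M}=\mathcal{I}$ and $\mathcal{I}_0$ is weakly dense.

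The delicate points are concentrated in this last step. The main obstacle is to justify that $\mathcal{I}$ is genuinely self-dual, so that Paschke's complementation applies, and to prove that $\mathcal{D}(\mathbb{G})\cdot\mathcal{D}(\mathbb{B})$ is dense in $L^2(\mathbb{G})$; the latter I would obtain from an approximate unit $\eta_i\in\mathcal{D}(\mathbb{B})$ with $f\cdot\eta_i\to f$ together with the density of $\mathcal{D}(\mathbb{G})$ in $L^2(\mathbb{G})$. A secondary technical point, needed already for membership, is the explicit identification of $\hat\pi'$ on $\mathcal{L}(\mathbb{B})'$ with right convolution by $\hat\pi(g)$, which has to be extracted from the commuting square established in the previous section.
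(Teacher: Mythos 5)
Your first two steps coincide with the paper's. The paper also verifies the intertwining relation only against the right convolution operators $\iota(h)\xi=\xi*h$, $h\in\mathcal{D}(\mathbb{B})$, via the computation $\rho_f(\eta*h)=f\cdot(\eta*h)=(f\cdot\eta)\cdot h=\hat{\pi}'(\iota(h))\rho_f(\eta)$, and then passes (implicitly, where you are explicit about normality of $\hat{\pi}'$ and ultraweak density of $\iota(\mathcal{D}(\mathbb{B}))$ in $\mathcal{L}(\mathbb{B})'$) to all of $\mathcal{L}(\mathbb{B})'$; your isometry remark $\|\rho_f\|^2=\|\prodscal{f}{f}_{\mathcal{D}(\mathbb{B})}\|$ is a small genuine addition, since the paper never explicitly addresses injectivity. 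For density, however, you take a genuinely different route. The paper argues by duality on vector functionals: assuming $\prodscal{\xi}{f\cdot\eta}=0$ for all $f$, it deduces $\prodscal{\xi}{u(\eta)}=0$ for each $u\in\mathcal{I}$ by a three-$\varepsilon$ approximation — pick $a\approx\eta$ in $L^2(\mathbb{B})$, then $b$ with $b*a\approx a$ (the approximation property of $\mathcal{A}(\mathbb{B})$), then $c\approx u(b)$ in $L^2(\mathbb{G})$, and transport everything through $u$ by the identity $u(b*a)=u(b)\cdot a$. You instead make $\mathcal{I}$ a self-dual $W^*$-module over $\mathcal{L}(\mathbb{B})$, invoke Paschke complementation of the weakly closed submodule $\mathcal{M}$, and reduce to $\mathcal{M}^{\perp}=0$. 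Your scheme is more conceptual and yields the stronger structural statement $\mathcal{I}=\mathcal{M}\oplus\mathcal{M}^{\perp}$, but it is costlier in machinery.

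The two inputs you flag are real and are not supplied by the paper, so as written your proof is incomplete at exactly those points — though both are fillable. Self-duality of $\mathcal{I}$ holds because $\mathcal{I}$ is an ultraweakly closed subspace with $\mathcal{I}\,\mathcal{L}(\mathbb{B})\subset\mathcal{I}$ and $\mathcal{I}^{*}\mathcal{I}\subset\mathcal{L}(\mathbb{B})$, hence a corner $pNq$ of a von Neumann algebra, and such corners are self-dual; one must also reconcile the weak operator topology closure with the module weak$^{*}$ topology appearing in Paschke's theorem (harmless for bounded convex sets, but it should be said). The more substantive input is the density of $\mathrm{span}\,\mathcal{D}(\mathbb{G})\cdot\mathcal{D}(\mathbb{B})$ in $L^2(\mathbb{G})$: this is a nondegeneracy (essentialness) statement for the $\hat{\pi}$-induced action, which does \emph{not} follow from the approximation property alone — the approximation property gives approximate units for convolution inside $\mathcal{D}(\mathbb{B})$, not the convergence $f\cdot\eta_i\to f$ in $L^2(\mathbb{G})$ that you posit. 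It can be extracted from Section \ref{closed}, since $\hat{\pi}$ extends to a unital normal map at the von Neumann level (so $\hat{\pi}'(\iota(\mathcal{D}(\mathbb{B})))$ is ultraweakly dense in a unital algebra, and a Kaplansky argument gives the claimed density), but this needs to be written out. Note that the paper's trick of applying the approximate unit on the $L^2(\mathbb{B})$ side ($b*a\to a$) and pushing it through $u$ via $u(b*a)=u(b)\cdot a$ is engineered precisely to avoid this nondegeneracy input: it uses only the approximation property of $\mathcal{A}(\mathbb{B})$, which is an axiom of the bornological framework. With your two lemmas filled in as above, your argument is a correct alternative; without them, the paper's elementary route is the safer one.
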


\begin{proof}
Let $f\in \mathcal{D}(\mathbb{G})$, $h\in \mathcal{D}(\mathbb{B})$ and $\eta\in \mathcal{D}(\mathbb{B})$. We have
\begin{align*}
    \rho_f(\lambda'(h)\Lambda(\eta))&=\Lambda(f\cdot(\eta*h))\\
    &=\Lambda((f\cdot\eta)\cdot h)\\
    &=\hat{\pi}'(\lambda'(h))\rho_f(\Lambda(\eta)).
\end{align*}
Thus the operator $\rho_f$ belongs to $\mathcal{I}$. It remains to show that the image of $\mathcal{E}(\mathbb{G})$ in $\mathcal{I}$ is dense. Let $\xi \in L^2(\mathbb{G})$ and $\eta \in L^2(\mathbb{B})$. Suppose we have
$$\prodscal{\Lambda(\xi)}{\rho_f(\Lambda(\eta))}=0$$
for all $f\in \mathcal{E}(\mathbb{G})$.
Let then $u\in \mathcal{I}$, we want to obtain that $\prodscal{\xi}{u(\Lambda(\eta))}=0$. Let $\varepsilon> 0$, there exist 
\begin{enumerate}
    \item $a\in \mathcal{A}(\mathbb{B})$ s.t. $\|\Lambda(\eta-a)\|_{L^2(\mathbb{B})}\leq \varepsilon$ (density of $\mathcal{A}(\mathbb{B})$),
    \item $b\in \mathcal{A}(\mathbb{B})$ s.t. $\|\Lambda(b*a-a)\|_{L^2(\mathbb{B})}\leq \varepsilon$ (essentialness),
    \item $c\in \mathcal{A}(\mathbb{G})$ s.t. $\|u(\Lambda(b))-\Lambda(c))\|_{L^2(\mathbb{G})}\leq \varepsilon$ (density of $\mathcal{A}(\mathbb{G})$).
\end{enumerate}
Now, there exist $k_1, k_2, k_3 > 0$ (depending only on the norms of $\Lambda(\xi)$, $\Lambda(\eta)$ and $u$ such that 
\begin{enumerate}
    \item $\mid\prodscal{\Lambda(\xi)}{u(\Lambda(\eta))}-\prodscal{\Lambda(\xi)}{u(\Lambda(a))}\mid\leq  k_1\varepsilon,$
    \item $\mid\prodscal{\Lambda(\xi)}{u(\Lambda(a))}-\prodscal{\Lambda(\xi)}{u(\Lambda(b*a))}\mid\leq k_2\varepsilon$,~  and we note that $u(\Lambda(b*a))=\lambda'(a)u(\Lambda(b))$;
    \item $\mid\prodscal{\Lambda(\xi)}{u(\Lambda(b))\cdot \lambda(a)}-\prodscal{\Lambda(\xi)}{\Lambda(c\cdot \eta)}\mid\leq k_3\varepsilon$,
\end{enumerate}
Finally, since $\prodscal{\xi}{c\cdot \eta}=0$ we have 
$$\mid \prodscal{\xi}{u(\eta)}\mid~\leq (k_1+k_2+k_3)\varepsilon,$$
So $\prodscal{\xi}{u(\eta)}=0$ and we are done.
\end{proof}

\begin{lm}\label{regular}
Let $\sV$ be a representation of $C^*_u(\mathbb{G})$ on any Hilbert module. One can endow $\sV$ with its von Neumann bornology  and consider the bornological space
$$\sV^{\infty}=\mathcal{D}(\mathbb{G})\hat\otimes_{\mathcal{D}(\mathbb{G})}\sV, $$
equipped with the left convolution action of $\mathcal{D}(\mathbb{G})$ is a bornological $\mathcal{D}(\mathbb{G})$-module  and defines a dense subspace of $\sV$.
\end{lm}
\begin{proof}
First, from the associativity of the bornological tensor product we have that
$$\mathcal{D}(\mathbb{G})\hat\otimes_{\mathcal{D}(\mathbb{G})}(\mathcal{D}(\mathbb{G})\hat\otimes_{\mathcal{D}(\mathbb{G})}\sV)=\mathcal{D}(\mathbb{G})\hat\otimes_{\mathcal{D}(\mathbb{G})}\sV,$$ and thus $\sV^{\infty}$ is a bornological $\mathcal{D}(\mathbb{G})$-module. 

Now consider the linear map $\mathcal{D}(\mathbb{G})\otimes_{\mathcal{D}(\mathbb{G})}\sV\to C^*_u(\mathbb{G})\otimes\sV$ defined by $f\otimes v\mapsto \lambda(f)\otimes v$. This map is bounded since bounded subspaces of $\sV$ are precisely bounded subspaces of $\sV$ with respect to its Hilbert topology. Furthermore this map leads to an injective map $\mathcal{D}(\mathbb{G})\hat\otimes_{\mathcal{D}(\mathbb{G})}\sV\to C^*_u(\mathbb{G})\hat\otimes_{C^*_u(\mathbb{G})}\sV\cong \sV$ which has dense range.


\end{proof}
\begin{rmk}
Using the duality between modules and comodules at the bornological level, we obtain that $\sV^{\infty}$ is also a comodule.
\end{rmk}
\begin{lm}
Let $x,y\in\sA(\GG)$. We have 
$$xy=\phi_\GG(xy^{(1)})y^{(2)},$$
where the notation $y^{(1)}, y^{(2)}$ refers to the legs of the coproduct $\hat{\Delta}(y)$.
\end{lm}
\begin{proof}
By duality, it is enough to show that $f*g=\epsilon(f*g_{(1)})g_{(2)}$ for all $f,g\in\sA(\GG)$. We have 
\begin{align*}
    \epsilon(f*g_{(1)})g_{(2)}&=\phi_\GG(S^{-1}(g_{(1)})f)g_{(2)}\\
    &=f*g.
\end{align*}
\end{proof}
\begin{lm}\label{lm:technique_for_action}Let $\sV^\infty$ be a $\mathcal{D}(\mathbb{B})$-module and let
$h\in  \mathcal{D}(\mathbb{B})$, $\eta \in  \mathcal{A}(\mathbb{G})$ and $w \in \sV^{\infty}$. We have
$$\lambda(h)\cdot(\Lambda(\eta)\otimes w)=\Lambda(\eta_{(2)})\btimes \pi(S^{-1}(\eta_{(1)}))h\cdot w,$$
where ``$~\cdot~$'' on the left hand side stands for the diagonal action of $\mathcal{D}(\mathbb{B})$ on $\sV^{\infty}$.
\end{lm}
\begin{proof}We use $\alpha$ to denote the action of $\mathcal{D}(\mathbb{B})$ on $\sV^{\infty}$. We have
\begin{align*}
\lambda(h)\cdot(\Lambda(\eta)\otimes w)&=\lambda(h)\cdot(\Lambda(\eta)\otimes w)\\ &=(\lambda\circ\hat{\pi}\btimes \alpha)(\hat{\Delta}(h))(\Lambda(\eta)\otimes w)\\
&=\lambda(\hat{\pi}(h^{(1)}))\Lambda(\eta)\hat\otimes h^{(2)}\cdot w\\
&=\phi_\BB(\pi(S^{-1}(\eta_{(1)}))h^{(1)})\Lambda(\eta_{(2)})\btimes  h^{(2)}\cdot w\\
&=\Lambda(\eta_{(2)})\btimes \pi(S^{-1}(\eta_{(1)}))h\cdot w.
\end{align*}

\end{proof}

\begin{proof}[Proof of Proposition \ref{equiv}]
Lemma \ref{injection} allows us to consider the linear map
\begin{align*}
\Psi : \mathcal{A}(\mathbb{G})\otimes \mathcal{D}(\mathbb{G})\otimes \sV^\infty\rightarrow &\  \mathcal{I}\otimes(L^2(\mathbb{G})\otimes \sV) \\
\xi\otimes f\otimes v \mapsto& (\iota\hat\otimes\Lambda)(\Delta(\xi)(f\otimes 1)) \otimes v
\end{align*}
where $\iota$ stands for the injection $\sD(\GG)\to\mathcal{I}$ from Proposition \ref{map}. 

Let $\xi, \eta\in\sA(\GG)$, $f,g\in\sD(\GG)$ and $v,w\in \sV^\infty$. We consider elements $[\iota(\xi_{(1)}f)\otimes\Lambda(\xi_{(2)}) \otimes v]$ and $[\iota(\eta_{(1)}g)\otimes\Lambda(\eta_{(2)}) \otimes w]$ of $\mathcal{I}\otimes_{\mathcal{L}(\mathbb{B})}(L^2(\mathbb{G})\otimes \sV)$ (where we us the notation $[ ~\cdot~]$ to refer to a class of elements in the balanced tensor product). Using the Lemma \ref{lm:technique_for_action} in the equality $(*)$ below, and the definition of the interior inner product, we obtain
\begin{align*}
    &\prodscal{[\iota(\xi_{(1)}f)\otimes\Lambda(\xi_{(2)}) \otimes v]}{[\iota(\eta_{(1)}g)\otimes\Lambda(\eta_{(2)}) \otimes w]}_{\mathcal{I}\otimes_{\mathcal{L}(\mathbb{B})}(L^2(\mathbb{G})\otimes \sV)}\\
    &= \prodscal{\Lambda(\xi_{(2)}) \otimes v}{\lambda(\prodscal{ \xi_{(1)}f}{\eta_{(1)}g }_{\sD(\BB)})\cdot(\Lambda(\eta_{(2)}) \otimes w)}_{L^2(\mathbb{G})\otimes \sV}\\
    &=\prodscal{\Lambda(\xi_{(2)}) \otimes v}{\phi_{\mathbb{G}}(\overline{\xi_{(1)}f}\eta_{(1)}g_{(1)})\lambda(\pi(\eta_{(2)}g_{(2)}\gamma)) \cdot(\Lambda(\eta_{(3)}) \otimes w)}_{L^2(\mathbb{G})\otimes \sV}\\
    &\stackrel{(*)}{=}\prodscal{\Lambda(\xi_{(2)}) \otimes v}{\phi_{\mathbb{G}}(\overline{\xi_{(1)}f}\eta_{(1)}g_{(1)})\Lambda(\eta_{(4)})\otimes  (\pi(S^{-1}(\eta_{(3)}))\pi(\eta_{(2)}g_{(2)}\gamma))\cdot w}_{L^2(\mathbb{G})\otimes \sV}\\
    &=\prodscal{\Lambda(\xi_{(2)}) \otimes v}{\phi_{\mathbb{G}}(\bar{f}~\overline{\xi_{(1)}}\eta_{(1)}g_{(1)})\Lambda(\eta_{(2)}) \otimes (\pi (g_{(2)}\gamma)\cdot w)}_{L^2(\mathbb{G})\otimes \sV}\\
    &=\phi_{\mathbb{G}}(\overline{\xi_{(2)}}\eta_{(2)})\prodscal{v}{\phi_{\mathbb{G}}(\bar{f}~\overline{\xi_{(1)}}\eta_{(1)}g_{(1)})  (\pi (g_{(2)}\gamma)\cdot w)}_{\sV}\\
    &=\prodscal{\Lambda(\xi) \otimes v}{\Lambda(\eta) \otimes \prodscal{f}{g}_{\sD(\BB)}\cdot w}_{L^2(\mathbb{G})\otimes \sV}\\
    &=\prodscal{\Lambda(\xi)\otimes [f\otimes v]}{\Lambda(\eta)\otimes [g\otimes w]}_{L^2(\mathbb{G})\otimes \mathcal{E}(\mathbb{G})\otimes_{\sD(\BB)} \sV}.
\end{align*}
In particular, this shows that elements in the kernel of the quotient  $\mathcal{A}(\mathbb{G})\otimes \mathcal{D}(\mathbb{G})\otimes \sV^\infty\rightarrow \mathcal{A}(\mathbb{G})\otimes \mathcal{D}(\mathbb{G})\otimes_{\sD(\BB)} \sV$ have null image in $\mathcal{I}\otimes_{\mathcal{L}(\mathbb{B})}(L^2(\mathbb{G})\otimes \sV)$ so the map $\Psi$ descends to a unitary map to the balanced tensor product.

Now we can consider the associated map 
$$\tilde\Psi : L^2(\mathbb{G})\otimes \mathcal{E}(\mathbb{G})\otimes_{C^*(\BB)} \sV\rightarrow \  \mathcal{I}\otimes_{\mathcal{L}(\mathbb{B})}(L^2(\mathbb{G})\otimes \sV).$$ 
Let us check that $\Psi$ intertwines the bicovariant $B$-correspondence structure. We start with the left action of $\sL(\GG)$. Let $g\in \mathcal{D}(\mathbb{G})$ and $\xi\otimes f\otimes v\in\mathcal{A}(\mathbb{G})\otimes \mathcal{D}(\mathbb{G})\otimes\sV^\infty$. We have
\begin{align*}
\lambda(g)\cdot (\Lambda(\xi)\otimes [f\otimes v])&=
    ((\lambda\otimes\id)(\hat{\Delta}(g)\otimes\id))(\Lambda(\xi)\otimes [f\otimes v]) \\
    &=\phi_{\mathbb{G}}(S^{-1}(\xi_{(1)}f_{(1)})g)\Lambda(\xi_{(2)})\otimes [f_{(2)}\otimes v],
\end{align*}
and
\begin{align*}
    \phi_{\mathbb{G}}(S^{-1}(\xi_{(1)}f_{(1)})g)\tilde\Psi(\Lambda(\xi_{(2)})\otimes [f_{(2)}\otimes v])&=\phi_{\mathbb{G}}(S^{-1}(\xi_{(1)}f_{(1)})g)[\iota(\xi_{(2)}f_{(2)})\otimes\Lambda(\xi_{(3)})\otimes v]\\
    &=[\iota(g*\xi_{(1)}f)\otimes \Lambda(\xi_{(2)})\otimes v]\\
    &=\lambda(g)\cdot [\iota(\xi_{(1)}f)\otimes \Lambda(\xi_{(2)})\otimes v]
\end{align*}
For the right action of $\sL(\GG)'$ consider again $g\in \mathcal{D}(\mathbb{G})$ and $\xi\otimes f\otimes v\in\mathcal{A}(\mathbb{G})\otimes \mathcal{D}(\mathbb{G})\otimes\sV^\infty$. We have
\begin{align*}
\tilde\Psi(\Lambda(\xi*g)\otimes [f\otimes v])\cdot\lambda(g)&=[\iota(\xi_{(1)}f)\otimes\Lambda(\xi_{(2)}*g)\otimes v]\\
&=\Psi(\Lambda(\xi)\otimes [f\otimes v])\cdot \lambda'(g),
\end{align*}
where we use Lemma \ref{lem:convolution_coproduct_compatibility}. Let now $g\in L^\infty(\GG)'$. We have
\begin{align*}
    \tilde\Psi(\beta(g)(\Lambda(\xi)\otimes [f\otimes v]))&=\tilde\Psi(\Lambda(R(g)\xi)\otimes [f\otimes v])\\
    &=\iota(R(g_{(2)})\xi_{(1)}f)\otimes\Lambda(R(g_{(1)})\xi_{(2)})\\
    &=\beta(g)\tilde\Psi(\Lambda(\xi)\otimes [f\otimes v]),
\end{align*}
where we use that $R$ is an anti coalgebra homomorphism.

We finish with the surjectivity of $\tilde\Psi$. Let $f\in\sD(\GG), g\in\sA(\GG)$ and $v\in\sV^\infty$ and consider the element 
$\iota(f)\otimes[\Lambda(g)\otimes v]$ of $\mathcal{I}\otimes_{\mathcal{L}(\mathbb{B})}(L^2(\mathbb{G})\otimes \sV)$. We observe that the element $[\Lambda(g_{(2)})\otimes S^{-1}(g_{(1)})f\otimes v]$ of $L^2(\mathbb{G})\otimes \mathcal{E}(\mathbb{G})\otimes_{C^*(\BB)}\sV$ is an antecedent of  $\iota(f)\otimes[\Lambda(g)\otimes v]$ for $\tilde\Psi$. We conclude with a density argument.
\end{proof}
\begin{thm}
The representations $ {\rm Ind}  ~\sV$ and $\mathcal{E}(\mathbb{G})\otimes_{C^*(\BB)}\sV$ are equivalent.
\end{thm}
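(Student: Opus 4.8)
My plan is to deduce this directly from Proposition~\ref{equiv} together with the characterisation of Vaes' induced representation. Recall that by \cite[Proposition 3.7]{Vaes} the assignment $W\mapsto L^2(\mathbb{G})\otimes W$ sets up an equivalence between representations of $\mathbb{G}$ on $A$-Hilbert modules and equivariant $\mathcal{L}(\mathbb{G})$-$\mathcal{L}(\mathbb{G})$-correspondences; in particular this functor reflects isomorphisms, so that an equivalence $L^2(\mathbb{G})\otimes W_1\cong L^2(\mathbb{G})\otimes W_2$ of equivariant correspondences forces $W_1\cong W_2$ as representations. By construction $\text{Ind}~V$ is precisely the representation determined by $\mathcal{I}\otimes_{\mathcal{L}(\mathbb{B})}(L^2(\mathbb{G})\otimes V)\cong L^2(\mathbb{G})\otimes\text{Ind}~V$.

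The first thing to verify is that the right-hand side of Proposition~\ref{equiv} is the \emph{canonical} correspondence attached to the representation $\mathcal{E}(\mathbb{G})\otimes_{C^*(\mathbb{B})}V$. Concretely I would check that the actions $\gamma_l,\gamma_r$ and the right $\mathcal{A}(\mathbb{G})$-action specified above coincide with the standard construction $W\mapsto L^2(\mathbb{G})\otimes W$ applied to $W=\mathcal{E}(\mathbb{G})\otimes_{C^*(\mathbb{B})}V$: the formula $\gamma_r(g)(\xi\otimes f\otimes v)=\xi*g^*\otimes v$ is the right regular action on the $L^2(\mathbb{G})$-leg alone, while $\gamma_l(g)$ acts through $\hat{\Delta}(g)$ diagonally, which is exactly the tensor product of the standard $\mathcal{L}(\mathbb{G})$-representation on $L^2(\mathbb{G})$ with the left $C^*(\mathbb{G})$-action on $\mathcal{E}(\mathbb{G})\otimes_{C^*(\mathbb{B})}V$ inherited from $\mathcal{E}(\mathbb{G})$. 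Hence $\mathcal{E}(\mathbb{G})\otimes_{C^*(\mathbb{B})}V$ is the representation recovered from this correspondence under the inverse of the Vaes equivalence.

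Granting this, Proposition~\ref{equiv} chains into
$$L^2(\mathbb{G})\otimes\text{Ind}~V\cong\mathcal{I}\otimes_{\mathcal{L}(\mathbb{B})}(L^2(\mathbb{G})\otimes V)\cong L^2(\mathbb{G})\otimes\bigl(\mathcal{E}(\mathbb{G})\otimes_{C^*(\mathbb{B})}V\bigr),$$
and cancelling the $L^2(\mathbb{G})$ factor via the faithfulness recalled above yields $\text{Ind}~V\cong\mathcal{E}(\mathbb{G})\otimes_{C^*(\mathbb{B})}V$. The step I expect to be delicate is precisely this identification of the correspondence structure: one must ensure that the isomorphism furnished by Proposition~\ref{equiv} respects the full equivariant $A$-correspondence data (the $A$-module structure and $A$-valued inner products coming from $V$), not merely the underlying Hilbert spaces, so that the cancellation takes place in the category in which \cite[Proposition 3.7]{Vaes} is stated. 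Once the two correspondence structures are matched, the conclusion is formal.
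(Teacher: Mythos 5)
Your proposal is correct and takes essentially the same approach as the paper, whose entire proof is to apply \cite[Proposition 3.7]{Vaes} to Proposition \ref{equiv}; your chain $L^2(\mathbb{G})\otimes\mathrm{Ind}\,V\cong\mathcal{I}\otimes_{\mathcal{L}(\mathbb{B})}(L^2(\mathbb{G})\otimes V)\cong L^2(\mathbb{G})\otimes(\mathcal{E}(\mathbb{G})\otimes_{C^*(\mathbb{B})}V)$ followed by cancellation is exactly that argument spelled out. The verification you flag as delicate---that the correspondence structure on the right-hand side is the canonical one attached to $\mathcal{E}(\mathbb{G})\otimes_{C^*(\mathbb{B})}V$---is precisely what the paper leaves implicit, having already set up the actions $\gamma_l$, $\gamma_r$ and the right $\mathcal{A}(\mathbb{G})$-action for this purpose before Proposition \ref{equiv}.
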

\begin{proof}
	From Proposition \ref{equiv} and Proposition \ref{prop:vaes} we obtain an isomorphism of bicovariant correspondence
	$$L^2(\mathbb{G})\otimes (\mathcal{E}(\mathbb{G})\otimes_{C^*(\BB)} \sV)\cong L^2(\mathbb{G})\otimes {\rm Ind}  ~\sV.$$
	The result follows.
\end{proof}
\section{Parabolic Induction}

In this section we give an explicit Rieffel induction module associated to the functor of parabolic induction for complex semi-simple quantum groups. In particular we show that our induction functor coincides with the classical definition in the case of parabolic induction \cite{Arano,yuncken-voigt}.  Finally, we give a geometric presentation of this parabolic induction module, in a similar way to what Clare did in \cite{Clare} for classical semi-simple Lie groups.
\subsection{Preliminaries}
We follow the notations and conventions of \cite{yuncken-voigt}.
Let $\mathfrak{g}$ be a complex semisimple Lie algebra and let $G_q$ be the associated simply connected complex semisimple quantum group and $K_q$ its maximal compact quantum subgroup, with its multiplicative unitary $W\in M(\mathcal{A}(\hat{K}_q)\otimes\mathcal{A}(K_q))$. We write $\mathcal{U}_q(\mathfrak{g})$ for the associated quantized enveloping algebra and $\mathcal{U}_q^{\mathbb{R}}(\mathfrak{k})$ for the same algebra equipped with the involution $*$, seen as the complexification of the quantized enveloping algebra of the compact form $K_q$. We recall that the algebra of representative functions on the Drinfeld double $G_q=K_q\bowtie \hat{K}_q$ is defined by
$$\mathcal{A}(G_q)=\mathcal{A}(K_q)\otimes \mathcal{A}(\hat{K}_q),$$ with coproduct 
$$\Delta_{G_q}(a\otimes f)=W^{-1}_{32}(a_{(1)}\otimes f_{(1)}\otimes a_{(2)}\otimes f_{(2)})W_{32}.$$
This idea is originally due to Podles and Woronowicz \cite{Wo}.  For a complete  discussion see \cite[Definition 3.18]{yuncken-voigt}.

Let $(u_{ij}^{\sigma})\in \mathcal{A}(K_q)$ denote the matrix coefficient associated to a weight basis of an irreducible representation $\sigma$ of $K_q$ and let $(\omega_{ij}^{\sigma})\in \mathcal{A}(\hat{K_q})$ denote the elements of the dual basis. We have
$$W=\sum_{i,j,\sigma}u_{ij}^{\sigma}\otimes \omega_{ij}^{\sigma},~~~~W^{-1}=\sum_{i,j,\sigma}S(u_{ij}^{\sigma})\otimes \omega_{ij}^{\sigma}, $$
where the sums run over all equivalent classes of irreducible representations. In practice we only write $W=u_{ij}^{\sigma}\otimes \omega_{ij}^{\sigma}$.

The multiplier Hopf algebra $\sA(G_q)$ is equipped with a left Haar state $\phi_{K_q}\otimes \psi_{\hat{K}_q}$, where $\phi_{K_q}$ is the Haar state on $\mathcal{A}(K_q)$ and $\psi_{\hat{K}_q}$ the right Haar state on $\mathcal{A}(\hat{K}_q)$. Note that this is also a right Haar state, so that $\delta_{G_q}=1$.

Let $\textbf{P}$ be the weight lattice associated to $\mathfrak{g}$ and  $\mathcal{U}_q^{\mathbb{R}}(\mathfrak{t})=\text{span}\{K_{\lambda}, \lambda\in \textbf{P}\}$. For each $\mu\in \textbf{P}$ we define $e^{\mu} \in \mathcal{U}_q^{\mathbb{R}}(\mathfrak{t})'$ by 
$$e^{\mu}(K_{\lambda})=q^{(\lambda,\mu)}.$$
In this way we can identify the algebra of functions on the torus subgroup $T$ of $K_q$ as 
$$\mathcal{A}(T)=\text{span}\{e^{\mu}, \mu\in \textbf{P}\}\subset \mathcal{U}_q^{\mathbb{R}}(\mathfrak{t})',$$
where $\textbf{P}$ is the weight lattice.

\begin{rmk} The classical torus subgroup $T$ is naturally identified with $\mathrm{spec}(\mathcal{A}(T))$ and we note that for any $\lambda\in \mathfrak{t}^*$ we obtain a character of the $*$-algebra $\mathcal{A}(T)$ by 
$$(K_{\lambda},e^{\mu})=q^{i(\lambda,\mu)}. $$
This yields an identification $T\cong i(\mathfrak{t}^*/\frac{2\pi}{log(q)}Q^{\vee})$ where $Q^{\vee}=\mathrm{Hom}(\textbf{P},\mathbb{Z})$ is the coroot lattice, see \cite[Section 5.11]{yuncken-voigt}. We will not use this identification in what follows.
\end{rmk}

We define the restriction map $\pi : \mathcal{A}(K_q)\rightarrow \mathcal{A}(T)$
via 
$$\pi(a)=a|_{ \mathcal{U}_q^{\mathbb{R}}(\mathfrak{t})}.$$

The Borel subgroup $B_q=T\bowtie \hat{K_q}$ is defined via $\mathcal{A}(B_q)=\mathcal{A}(T)\otimes \mathcal{A}(\hat{K}_q)$ (see \cite[section 4.7]{yuncken-voigt}) with coproduct 
$$\Delta(a\otimes f)=\widetilde{W}^{-1}_{32}(a_{(1)}\otimes f_{(1)}\otimes a_{(2)}\otimes f_{(2)})\widetilde{W}_{32}, $$
twisted by the bicharacter $\widetilde{W}=(\pi\otimes \text{id})(W)$. It is a closed subgroup of $G_q$ with restriction map
$$\pi\otimes \text{id} : \mathcal{A}(G_q)\rightarrow \mathcal{A}(B_q).$$
We denote by $\phi_T$ the Haar functional on $\mathcal{A}(T)$. The functional $\phi_T\otimes\psi_{\hat{K}_q}$, which will be denoted $\phi_{B_q}$, is left invariant on $\mathcal{A}(B_q)$. We have
 $$\delta_{B_q}=\textbf{1}\otimes K_{-4\rho},$$
 for the associated modular element (see the proof of \cite[Propopistion 4.19]{yuncken-voigt}).
We thus obtain our conditional expectation
 $E : \mathcal{D}(G_q)\rightarrow \mathcal{D}(B_q)$, $E(a\otimes f)=\pi(a)\otimes fK_{-2\rho}$, for all $a\otimes f$ in $\mathcal{D}(K_q)\otimes \mathcal{D}(\hat{K}_q)$. 
 
 Let $(\mu,\lambda)\in \textbf{P}\times \mathfrak{t}^*$. We recall that the principal series representation associated to $(\mu,\lambda)$ is defined to be the space 
 $$\mathrm{Ind}_{B_q}^{G_q}\mathbb{C}_{\mu,\lambda}=\{\xi\in M(\mathcal{A}(G_q))~|~(\text{id}\otimes \pi_{B_q})\Delta_{G_q}(\xi)=\xi\otimes (e^{\mu}\otimes K_{2\rho+\lambda})\},$$
 with a coaction induced by the comultiplication of $G_q$.
 The notation $\Ind_{B_q}^{G_q}\CC_{\mu,\lambda}$ here is inspired by analogy with the classical induction procedure, see \cite[Section 6.4.2]{yuncken-voigt}.  Our goal here is to show that this coincides with the induction functor which we have introduced here.
 
 
 Finally, we notice that $B_q$ is amenable and that the condition $\sigma_{B_q}(\pi(\delta_{G_q}))$ is trivially verified since $\delta_{G_q}=1$.

\subsection{{The quotient map}}

In the classical case, with $G=KAN$, principal series representations are induced from characters of the the Borel subgroup $B=MAN$. Explicitly, we choose first a character  
$\mu$ of $M$ and $\lambda$ of $A$ and then the identification $MA=B/N$ allows us to extend $\mu\otimes\lambda$ to a character of $B$. In this way we obtain the principal series representation 
$$\text{Ind}_B^G~\mu\otimes\lambda.$$
In the quantum case we do not have an analog for the subgroup $N$. But, as we now explain, we do have a ``quotient'' map
$$\widehat{K_q} \twoheadrightarrow A_q.$$

Let us make this explicit. There are two versions of the map  $\pi_T$. Firstly, with the canonical identification of $*$-algebras $\mathcal{A}(K_q)=\mathcal{D}(\widehat{K_q})$ and $\mathcal{A}(T)=\mathcal{D}(A_q)$, one can consider 
$$\pi_T~:~\mathcal{D}(\widehat{K_q})\rightarrow \mathcal{D}(A_q), $$
which is a $*$-morphism and comes with its dual morphism $\hat{\pi}_T : \sA(A_q)\to M(\sA(\widehat{K_q}))$. Secondly, using the identifications of vector spaces $\mathcal{A}(\widehat{K_q})\cong \mathcal{D}(\widehat{K_q})$ and $\mathcal{A}(A_q)\cong\mathcal{D}(A_q)$ the same map can be interpreted as a map
$$\pi_T : \mathcal{A}(\widehat{K_q})\rightarrow \mathcal{A}(A_q).$$
This is a conditional expectation is the sense of Proposition \ref{cond}, observing that $K_q$ and $T$ are unimodular. In particular $\pi(fK_{\lambda})=\pi(f)K_{\lambda}$ for all $f\in \mathcal{A}(\widehat{K_q})$, $\lambda\in \textbf{P}$. This is the map $\pi_T : \mathcal{A}(\widehat{K_q})\rightarrow \mathcal{A}(A_q)$ that we call the \textit{quotient map}.
This map has also the notable property 
$$\phi_{\widehat{K_q}}(f)=\phi_{A_q}(\pi_T(f)). $$
 Indeed we have for all $a \in \mathcal{A}(K_q)$
\begin{align*}
   \phi_{\widehat{K_q}}(\mathcal{F}_{K_q}(a))&=\epsilon_{K_q}(a)\\
   &=\epsilon_{T}(\pi_T(a))\\
   &=\phi_{A_q}(\pi_T(\mathcal{F}_{K_q}(a))).
\end{align*}
\begin{rmk}
In the rest of this paper we extensively use Sweedler notation. Since one considers both $\mathcal{A}(K_q)$ and $\mathcal{A}(\widehat{K_q})$, this can be confusing. The convention is as follows. If we write $f\in \sA(H_q)$ or $f\in \sD(H_q)$ (where $H_q=G_q,~K_q,~\widehat{K_q},~T$ or $A_q$) then $f_{(1)}\otimes f_{(2)}$ always refers to the coproduct of $\sA(G_q)$.
\end{rmk}
\begin{lm}\label{lm:formula}
Let $f\in \mathcal{A}(\widehat{K_q})$. We have 
$$\pi_T(f_{(2)})\otimes f_{(1)}=\pi_T(f)_{(2)}\otimes\hat\pi_T(\pi_T(f)_{(1)}).$$
In particular this means that the map $\sA(A_q)\to M( \mathcal{A}(\widehat{K_q})\otimes\sA(A_q))$ given by $\pi_T(f)\mapsto f_{(1)}\otimes\pi_T(f_{(2)})$ is well defined.
\end{lm}
\begin{proof}

Let $f,g\in \mathcal{D}(\widehat{K_q})$. On the one hand we have
\begin{align*}
    \pi_T(g*f)&=\pi_T(f_{(2)})\phi_{\widehat{K_q}}(S^{-1}(f_{(1)})g)\\
    &=\pi_T(f_{(2)})(g,S^{-1}(f_{(1)})).
\end{align*}
And on the other hand
\begin{align*}
    \pi_T(g)*\pi_T(f)&=\pi_T(f)_{(2)}\phi_{A_q}(S^{-1}(\pi_T(f)_{(1)})\pi_T(g))\\
    &=\pi_T(f)_{(2)}(g,\hat\pi_T(S^{-1}(\pi_T(f)_{(1)})))\\
    &=\pi_T(f)_{(2)}(g,S^{-1}(\hat\pi_T(\pi_T(f)_{(1)})))
\end{align*}

One can thus identify the legs and we obtain 
$$\pi_T(f_{(2)})\otimes f_{(1)}=\pi_T(f)_{(2)}\otimes\hat\pi_T(\pi_T(f)_{(1)}).$$

\end{proof}
We denote by $\alpha_{A_q} : \sA(A_q)\to M(\mathcal{A}(\widehat{K_q}))\otimes \sA(A_q)$ the $\mathcal{A}(\widehat{K_q})$ coaction we obtain on $\sA(A_q)$. That is, for $h\in\mathcal{A}(A_q)$ we have 
$$\alpha_{A_q}(h)=\hat\pi_T(h_{(1)})\otimes h_{(2)},$$
and for $f\in \mathcal{A}(\widehat{K_q})$ one can also write
$$\alpha_{A_q}(\pi_T(f))=f_{(1)}\otimes \pi_T(f_{(2)}).$$

\subsection{{The parabolic induction module}}
The goal here is to build a Hilbert module which implements the parabolic induction functor. We define this module in this section as a balanced tensor product $\mathcal{E}(G_q)\otimes_{C^*(B_q)}C^*(L_q)$, where $\mathcal{E}(G_q)$ is the induction module built from the closed quantum subgroup $B_q$ and where we note $L_q=T\times A_q$.

\begin{lm}
The linear map $(\id\otimes\pi_T) : \sD(B_q)\to \sD(L_q)$
is a *-Hopf homomorphism.
\end{lm}
\begin{proof}
We first show that $(\id\otimes\hat\pi_T) : \sA(L_q)\to M(\sA(B_q))$ is a *-Hopf homomorphism, then we conclude with a duality argument. Before we start, we recall that $\hat\pi_T : \sA(A_q)\to M(\sA(\widehat{K_q}))$ is a Hopf *-morphism. We have that
\begin{align*}
&\Delta_{B_q}(a\otimes f)=a_{(1)}\otimes \omega_{ii}^{\sigma}f_{(1)}\omega_{rr}^{\nu}\otimes \pi_T(u_{ii}^{\sigma}S(u_{rr}^{\nu})) a_{(2)}\otimes f_{(2)},
\end{align*}
for all $a\otimes f\in \sA(B_q)$. Let $a\otimes h\in \sA(L_q)$. We have on the one hand
\begin{align*}
    ((\id\otimes\hat\pi_T)\otimes(\id\otimes\hat\pi_T))(\Delta_{L_q}(a\otimes h))&=a_{(1)}\otimes \hat\pi_T(h_{(1)})\otimes a_{(2)}\otimes \hat\pi_T(h_{(2)}).
\end{align*}
And on the other hand 
\begin{align*}
  \Delta_{B_q}(a\otimes \hat\pi_T(h))&=a_{(1)}\otimes \omega_{ii}^{\sigma}\hat\pi_T(h_{(1)})\omega_{rr}^{\nu}\otimes \pi_T(u_{ii}^{\sigma}S(u_{rr}^{\nu})) a_{(2)}\otimes \hat\pi_T(h_{(2)}),
\end{align*}
and since $\hat\pi_T$ maps $\sA(A_q)$ into the set of diagonal elements of $\mathcal{A}(\widehat{K_q})$, we obtain 
\begin{align*}
    \Delta_{B_q}(a\otimes \hat\pi_T(h))&=a_{(1)}\otimes \omega_{ii}^{\sigma}\hat\pi_T(h_{(1)})\otimes \pi_T(u_{ii}^{\sigma}S(u_{ii}^{\sigma})) a_{(2)}\otimes \hat\pi_T(h_{(2)})\\
    &=a_{(1)}\otimes \hat\pi_T(h_{(1)})\otimes a_{(2)}\otimes \hat\pi_T(h_{(2)}).
\end{align*}
Thus $(\id\otimes\hat\pi_T)$ is compatible with the coproducts. The *-algebra structure of $\sA(B_q)$ is not twisted so there is no difficulty to see that $(\id\otimes\hat\pi_T)$ is a *-algebra homomorphism. To conclude we just notice that since the pairing between $\sD(B_q)$ and $\sA(B_q)$ is defined leg by leg it is clear that the dual morphism of $\id\otimes\hat\pi_T$ is $\id\otimes\pi_T$.
\end{proof}

Let $(\mu,\lambda)\in  \textbf{P}\times \mathfrak{t}^*_q$.  One can build the one dimensional representation of $L_q$ on 
$\mathbb{C}_{\mu,\lambda}=\mathbb{C}_{\mu}\otimes\mathbb{C}_{\lambda}$ via
$$(\tau\otimes h)\cdot 1=\phi_T(e^{-\mu}\tau)\phi_{A_q}(K_{-\lambda} h),$$ 
for all $h\in \mathcal{D}(A_q),~ \tau\in \mathcal{D}(T).$
Since $\mathcal{D}(L_q)$ is essential, we have  $\mathcal{D}(L_q)\otimes_{\mathcal{D}(L_q)}\mathbb{C}_{\mu,\lambda}\cong \mathbb{C}_{\mu,\lambda}$. Furthermore since $\mathcal{D}(L_q)$ is a left $\mathcal{D}(B_q)$-module, one can consider the action of $\mathcal{D}(B_q)$ on $\mathcal{D}(L_q)\otimes_{\mathcal{D}(L_q)}\mathbb{C}_{\mu,\lambda}$, which happens to be exactly the character of $B_q$ associated to $(\mu,\lambda)$, according to the previous lemma. In particular this shows that such character can be factorized through the morphism $(\id\otimes\pi_T) : \mathcal{D}(B_q)\to \mathcal{D}(L_q)$.

We now confirm that the classical definition of a parabolically induced representation agrees with the general induction method we developed.

\begin{lm}\label{lm:decross}
Let $a\otimes f \in \mathcal{A}(G_q)$. We have
$$(a\otimes f)_{(1)}\otimes (\pi_T\otimes\pi_T)((a\otimes f)_{(2)})=a_{(1)}\otimes f_{(1)}\otimes (\pi_T\otimes\pi_T)(a_{(2)}\otimes f_{(2)}),$$
where $(a\otimes f)_{(1)}\otimes (a\otimes f)_{(2)}$ refers to the coproduct of $\mathcal{A}(G_q)$.
\end{lm}
\begin{proof}
Let $a\otimes f \in \mathcal{A}(G_q)$. We have
\begin{align*}
(a\otimes f)_{(1)}\otimes (\pi_T\otimes\pi_T)((a\otimes f)_{(2)})
&=
   a_{(1)}\otimes \omega_{ij}^{\sigma}f_{(1)}\omega_{rs}^{\nu}\otimes \pi_T(S(u_{ij}^{\sigma}) a_{(2)}u_{rs}^{\nu})\otimes \pi_T(f_{(2)})\\
   &=a_{(1)}\otimes \omega_{ii}^{\sigma}f_{(1)}\omega_{rr}^{\nu}\otimes \pi_T(u_{ii}^{\sigma}S(u_{rr}^{\nu})) \pi_T(a_{(2)})\otimes \pi_T(f_{(2)})\\
   &\overset{(*)}{=}a_{(1)}\otimes \omega_{ii}^{\sigma}\hat{\pi}_T(\pi_T(f_{(1)}))\omega_{rr}^{\nu}\otimes \pi_T(u_{ii}^{\sigma}S(u_{rr}^{\nu})) \pi_T(a_{(2)})\otimes \pi_T(f_{(2)})\\
   &=a_{(1)}\otimes f_{(1)}\otimes \pi_T(a_{(2)})\otimes \pi_T(f_{(2)}),
\end{align*}
where at equality $(*)$ we used Lemma \ref{lm:formula}.
\end{proof} 
We now consider the $\sD(B_q)$-inner product on $\sD(G_q)$, given by Proposition \ref{prodscal}. According to \cite[Lemma 4.17]{yuncken-voigt} we have $\delta_{B_q}=1\otimes K_{-4\rho}$.
\begin{lm}
Let $a\otimes f, b\otimes g \in \mathcal{D}(G_q)$. We have 
$$(\id\otimes\pi_T)(\prodscal{a\otimes f}{b\otimes g}_{\sD(B_q)})=\pi_T(a^**b)\otimes \pi_T(f^**g)K_{-2\rho}.$$
\end{lm}
\begin{proof}
$a\otimes f, b\otimes g \in \mathcal{D}(G_q)$. Using Remark \ref{rmk:prodscalexpress} we obtain
\begin{align*}
    (\id\otimes\pi_T)(\prodscal{a\otimes f}{b\otimes g}_{\sD(B_q)})&=(\id\otimes\pi_T)(\phi_{G_q}(\overline{(a\otimes f)}(b\otimes g)_{(1)})(\pi_T\otimes\id)((b\otimes g)_{(2)})(1\otimes K_{-2\rho}))\\
    &=\phi_{G_q}(\overline{(a\otimes f)}(b\otimes g)_{(1)})(\pi_T\otimes\pi_T)((b\otimes g)_{(2)})(1\otimes K_{-2\rho})\\
    &\overset{(*)}{=}\phi_{G_q}((\overline{a}\otimes \overline{f})(b_{(1)}\otimes g_{(1)}))\pi_T(b_{(2)})\otimes \pi_T(g_{(2)})(1\otimes K_{-2\rho})
    \\&=\pi_T(a^**b)\otimes \pi_T(f^**g)K_{-2\rho},
\end{align*}
where for the equality $(*)$ we use the previous lemma and that the involution on $\mathcal{A}(G_q)$ is leg-wise. For the last line we simply use that $\phi_{G_q}=\phi_{K_q}\otimes\psi_{\widehat{K_q}}$ and identify convolutions on each leg.
\end{proof}
\begin{prop}
The unitary representations $\mathcal{D}(G_q)\otimes_{\sD(B_q)}\mathbb{C}_{\mu,\lambda}$ and $ {\rm Ind}_{B_q}^{G_q}\mathbb{C}_{\mu,\lambda}$ of $\sD(G_q)$ are isomorphic.
\end{prop}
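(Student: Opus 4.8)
The plan is to exhibit an explicit unitary $\Phi$ from a dense subspace of $\mathcal{E}(G_q)\otimes_{\mathcal{D}(B_q)}\mathbb{C}_{\mu,\lambda}$ onto $\mathrm{Ind}_{B_q}^{G_q}\mathbb{C}_{\mu,\lambda}$ and to check that it intertwines the two left $C^*(G_q)$-actions. Since $\mathbb{C}_{\mu,\lambda}$ is one-dimensional, the balanced tensor product is the completion of the quotient of $\mathcal{D}(G_q)$ by the relations $f\cdot h\otimes 1=\chi_{\mu,\lambda}(h)\,(f\otimes 1)$, $h\in\mathcal{D}(B_q)$, where $\chi_{\mu,\lambda}$ is the character of $\mathcal{D}(B_q)$ obtained by composing $(\mathrm{id}\otimes\pi):\mathcal{D}(B_q)\to\mathcal{D}(L_q)$ with $\mathbb{C}_\mu\otimes\mathbb{C}_\lambda$. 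First I would make the inner product on this side fully explicit: for $f,g\in\mathcal{D}(G_q)$ one has $\langle f\otimes 1,g\otimes 1\rangle=\chi_{\mu,\lambda}\big((\mathrm{id}\otimes\pi)(E(f^**g))\big)$, and I would unwind the right-hand side using $E(a\otimes f)=\pi(a)\otimes fK_{-2\rho}$, the modular element $\delta_{B_q}=\mathbf{1}\otimes K_{-4\rho}$, and the definition of the convolution product of $\mathcal{D}(G_q)$ from Section 2.

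Second, I would define $\Phi$ as the projection of $\mathcal{D}(G_q)$ onto the $\chi$-equivariant subspace. Concretely $\Phi$ sends $f$ to the element of $M(\mathcal{A}(G_q))$ obtained by applying $\Delta_{G_q}$ and then integrating the $B_q$-leg against the character: over the compact torus direction $T$ one projects onto the $e^\mu$-isotypic component using the Haar state $\phi_T$, while over the $A_q\cong\hat{K}_q$ direction one uses the character $\lambda$ together with the quotient map $\pi:\mathcal{A}(\hat{K}_q)\to\mathcal{A}(A_q)$ and the identity $\phi_{\hat{K}_q}=\phi_{A_q}\circ\pi$ recorded in the preliminaries. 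That $\Phi(f)$ satisfies $(\mathrm{id}\otimes\pi_{B_q})\Delta_{G_q}(\Phi(f))=\Phi(f)\otimes(e^\mu\otimes K_{2\rho+\lambda})$ should follow from coassociativity and from the fact that $e^\mu\otimes K_{2\rho+\lambda}$ is group-like, the shift by $2\rho$ being accounted for by the factor $K_{-2\rho}$ in $E$ and by the group-like element $\gamma$.

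Third, I would verify the three compatibilities. That $\Phi$ descends to the balanced tensor product, i.e. $\Phi(f\cdot h)=\chi_{\mu,\lambda}(h)\Phi(f)$, is immediate from Proposition \ref{cond} (the relation $E(f\cdot h)=E(f)*h$) together with the character property of $\chi_{\mu,\lambda}$. That $\Phi$ intertwines the left convolution action of $C^*(G_q)$ is a formal consequence of coassociativity, since left convolution acts on the first leg of $\Delta_{G_q}(f)$ while $\Phi$ only touches the second. The isometry $\langle\Phi(f),\Phi(g)\rangle_{\mathrm{Ind}}=\langle f\otimes 1,g\otimes 1\rangle$ is the computational core: I would reduce both sides, via the explicit inner product of the first step and the Haar data, to the common expression $\phi_{K_q}(\overline{a}b)$ after carrying out the $T$- and $A_q$-integrations, the key input again being $\phi_{\hat{K}_q}=\phi_{A_q}\circ\pi$. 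Finally I would show that $\Phi$ has dense range, so that it extends to a unitary of Hilbert spaces (the coefficient algebra here is $\mathbb{C}$); surjectivity onto a dense subspace is clear because every equivariant element arises as $\Phi(f)$ for a suitable $f\in\mathcal{D}(G_q)$, and the closure is handled by an approximation argument of the same flavour as in Lemma \ref{injection}, using the approximation property and the density of $\mathcal{A}(K_q)$ and $\mathcal{A}(\hat{K}_q)$.

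The hard part will be matching the modular normalizations across the second and third steps: the element $\gamma$, the factor $K_{-2\rho}$ in $E$, and the weight shift $2\rho+\lambda$ in the definition of $\mathrm{Ind}_{B_q}^{G_q}\mathbb{C}_{\mu,\lambda}$ must all conspire so that the equivariance character on the geometric side is \emph{exactly} $e^\mu\otimes K_{2\rho+\lambda}$ and so that the two inner products agree on the nose rather than up to an undetermined scalar. Pinning down these half-density and $\rho$-shift factors is where the genuine work lies; the remaining verifications are formal consequences of coassociativity and of the results of Section \ref{closed}.
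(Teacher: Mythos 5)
Your plan is essentially the paper's own proof: the map you describe (apply $\Delta_{G_q}$ and pair the $B_q$-leg against the character, with the $K_{-2\rho}$ correction coming from $E$) is exactly the paper's $(a\otimes f)\otimes 1\mapsto (a\otimes f)\cdot(e^{\mu}\otimes K_{\lambda})$, and your verification steps match it point for point --- descent to the balanced tensor product via the character property $X*(e^{\mu}\otimes K_{\lambda})=(\mu\otimes\lambda)(X)(e^{\mu}\otimes K_{\lambda})$, equivariance with target character $e^{\mu}\otimes K_{2\rho+\lambda}$ from the group-like property, formal intertwining, and isometry by reduction to elements of the form $a\otimes\hat{\mathbf{1}}_{K_q}$ ending in $\phi_{K_q}(\overline{a\cdot e^{\mu}}\, b\cdot e^{\mu})$. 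The modular bookkeeping you rightly flag as the core difficulty is resolved in the paper exactly as you anticipate, via the factor $K_{-2\rho}$ in $E$ together with the KMS property $\hat{\psi}(fg)=\hat{\psi}(K_{2\rho}\rightharpoonup g\leftharpoonup K_{-2\rho}f)$ and the left invariance of $\phi_{B_q}$.
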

\begin{proof}
We consider the map $ \Psi$ such that 
\begin{align*}
    \Psi~:~\mathcal{D}(G_q)&\longrightarrow \text{Ind}_{B_q}^{G_q}\mathbb{C}_{\mu,\lambda}\\
    (a\otimes f)&\longmapsto a*\hat{\pi}_T(e^{\mu})\otimes \phi_{\widehat{K}_q}(fK_{-\lambda-2\rho})K_{\lambda+2\rho}.
\end{align*}
We will show that this map is surjective, intertwines the $\sA(G_q)$ coactions and descends to the balanced tensor product $\mathcal{D}(G_q)\otimes_{\sD(B_q)}\mathbb{C}_{\mu,\lambda}$. Let $a\otimes f\in \mathcal{D}(G_q)$. We first show that $a*\hat{\pi}_T(e^{\mu})\otimes \phi_{\widehat{K}_q}(fK_{-\lambda+2\rho})K_{\lambda+2\rho}$ belongs to $\mathrm{Ind}_{B_q}^{G_q}\mathbb{C}_{\mu,\lambda}$. It is enough to show that $(\id\otimes \pi_T)(\Delta_{K_q}(a*\hat{\pi}_T(e^{\mu})))=(a*\hat{\pi}_T(e^{\mu}))\otimes e^{\mu}$. For this, since $e^\mu$ is group-like we have 
\begin{align*}
    (\id\otimes \pi_T)(\Delta_{K_q}(a*\hat{\pi}_T(e^{\mu})))&=\phi_T(e^{-\mu}\pi_T(a_{(3)}))a_{(1)}\otimes \pi_T(a_{(2)})\\
    &=\phi_T(e^{-\mu}\pi_T(a_{(2)}))a_{(1)}\otimes e^{\mu}.
\end{align*}

Next, let $a\otimes f, b\otimes g$ be in $\mathcal{D}(G_q)$ and consider the elements $[a\otimes f\otimes 1], [b\otimes g\otimes 1]$ of $\mathcal{D}(G_q)\otimes_{\sD(B_q)}\mathbb{C}_{\mu,\lambda}$. We have
\begin{align*}
   \prodscal{[(a\otimes f)\otimes 1]}{[(b\otimes g)\otimes 1]}&=\prodscal{a\otimes f}{b\otimes g}_{\sD(B_q)}\cdot 1\\
   &=(\id\otimes\pi_T)(\prodscal{a\otimes f}{b\otimes g}_{\sD(B_q)})\cdot 1\\
   &=(\pi_T(a^** b)\otimes \pi_T(f^**g)K_{-2\rho})\cdot 1\\
   &=\phi_T(\pi_T(a^** b) e^{-\mu})\phi_{\widehat{K}_q}((f^**g)K_{-2\rho-\lambda})\\
   &=\phi_T(\pi_T(a^** b) e^{-\mu})\phi_{\widehat{K}_q}(f^*K_{-2\rho-\lambda})\phi_{\widehat{K}_q}(gK_{-2\rho-\lambda}),
\end{align*}
where at the last line we used that $\phi_{\widehat{K}_q}(x*y)=\phi_{\widehat{K}_q}(x)\phi_{\widehat{K}_q}(y)$, $\forall x,y\in \sD(\widehat{K}_q)$. Note also that $\phi_{\widehat{K}_q}(f^*K_{-2\rho-\lambda})=(K_{-2\rho-\lambda}^*f^*)$ since $K_{-2\rho-\lambda}$ is self-adjoint and $\sigma_{\widehat{K}_q}(K_{-2\rho-\lambda})=K_{-2\rho-\lambda}$. For the calculation on the right hand side we will use that $(e^\mu)^*=e^\mu$ and that $e^\mu*e^\mu=e^\mu$. We also use Lemma \ref{lem:dual_ip}. We have
\begin{align*}
    \prodscal{a*\hat{\pi}_T(e^{\mu})\otimes K_{\lambda+2\rho}}{b*\hat{\pi}_T(e^{\mu})\otimes K_{\lambda+2\rho}}&=\prodscal{a*\hat{\pi}_T(e^{\mu})}{b*\hat{\pi}_T(e^{\mu})}\\
    &=\epsilon_{K_q}((a*\hat{\pi}_T(e^{\mu}))^**b*\hat{\pi}_T(e^{\mu}))\\
    &=\epsilon_{T}(\pi_T(\hat{\pi}_T(e^{\mu})^**a^*))*(b*\hat{\pi}_T(e^{\mu}))\\
    &=\epsilon_{T}(e^{\mu}*\pi_T(a^**b)*e^{\mu})\\
    &=\epsilon_{T}(\pi_T(a^**b)*(e^{\mu})^*)\\
    &=\phi_{T}(\pi_T(a^**b)\overline{e^{\mu}})\\
    &=\phi_{T}(\pi_T(a^**b)e^{-\mu}).\\
\end{align*}
This then shows that $\Psi$ descends to an unitary map on the balanced tensor product  $\mathcal{D}(G_q)\otimes_{\sD(B_q)}\mathbb{C}_{\mu,\lambda}\to {\rm Ind}_{B_q}^{G_q}\mathbb{C}_{\mu,\lambda}$. 

To conclude, we show that $\Psi$ is surjective. To this end we first notice that $\mathrm{Ind}_{B_q}^{G_q}\mathbb{C}_{\mu,\lambda}$ is spanned by elements of type $a\otimes K_{\lambda+2\rho}$ for $a\in \Gamma(\mathcal{E}_{\mu,\lambda})$, where $\Gamma(\mathcal{E}_{\mu,\lambda})$ is defined in \cite[Section 6.4.2]{yuncken-voigt}. This follows from the fact that the map $\mathrm{ext} : \Gamma(\mathcal{E}_{\mu,\lambda})\to \mathrm{Ind}_{B_q}^{G_q}\mathbb{C}_{\mu,\lambda}$ from \cite[Lemma 6.18]{yuncken-voigt} is an isomorphism and we have $\mathrm{ext}(a)=a\otimes K_{\lambda+2\rho}$ for all $a\in \Gamma(\mathcal{E}_{\mu,\lambda})$. Let then $a\in \Gamma(\mathcal{E}_{\mu,\lambda})$. We have that $a*\hat{\pi}_T(e^{\mu})=a$; thus the element $a\otimes \epsilon_{K_q}\otimes 1$ of $\mathcal{D}(G_q)\otimes_{\sD(B_q)}\mathbb{C}_{\mu,\lambda}$ is an antecedent of $a\otimes K_{\lambda+2\rho}$.
\end{proof}

One can now consider the $\sD(L_q)$-inner product module $\sD(G_q)\otimes_{\sD(B_q)}\sD(L_q)$ and we have 
$$\sD(G_q)\otimes_{\sD(B_q)}\sD(L_q)\otimes_{\sD(L_q)}\mathbb{C}_{\mu,\lambda}\cong  \sD(G_q)\otimes_{\sD(B_q)}\mathbb{C}_{\mu,\lambda}.$$
As a consequence, $\sD(G_q)\otimes_{\sD(B_q)}\sD(L_q)$ is the parabolic induction module.

\subsection{{Geometric presentation of the induction module}}

We consider the linear space
$$\mathcal{A}(G_q/N_q)=\mathcal{A}(K_q) \otimes\mathcal{A}(A_q),$$
equipped with its natural structure of untwisted $*$-algebra. We endow $\mathcal{A}(G_q/N_q)$ with a left $\mathcal{A}(G_q)$-coaction given, for $a\otimes h \in\mathcal{A}(G_q/N_q)$, by
$$\Delta_{G_q/N_q}(a\otimes h)= W^{-1}_{32}(\Delta_{K_q}(a)\otimes\alpha_{A_q}(h))W_{32}\ \in M(\mathcal{A}(G_q))\otimes\mathcal{A}(G_q/N_q),$$
where the coation $\alpha_{A_q}$ is defined after Lemma \ref{lm:formula}.  Let $f\in \mathcal{A}(\widehat{K_q})$. We have
$$\Delta_{G_q/N_q}(a\otimes \pi_T(f))= W^{-1}_{32}(a_{(1)}\otimes f_{(1)}\otimes a_{(2)}\otimes \pi_T(f_{(2)}))W_{32}.$$
From this we see that $\Delta_{G_q/N_q}(a\otimes \pi_T(f))=(\id\otimes\id\otimes\id\otimes\pi_T)(\Delta_{G_q}(a\otimes f))$ and it directly follows that the map $\Delta_{G_q/N_q}$ is coassociative.
This remark also implies the next proposition.


\begin{prop}\label{intertwiner}
The map $\id\otimes \pi_T : \mathcal{A}(G_q)\rightarrow \mathcal{A}(G_q/N_q)$ intertwines the left-$\mathcal{A}(G_q)$-coactions where $\mathcal{A}(G_q)$ is considered with its natural comodule structure given by the coproduct. 
\end{prop}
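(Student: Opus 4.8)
The goal is to show that the map $\mathrm{id}\otimes\pi : \mathcal{A}(G_q)\to\mathcal{A}(G_q/N_q)$ intertwines the left $\mathcal{A}(G_q)$-coactions. On the source, $\mathcal{A}(G_q)$ carries its natural comodule structure given by the coproduct $\Delta_{G_q}$; on the target, the coaction is $\Delta_{G_q/N_q}$. So the identity to establish is
\begin{equation*}
\Delta_{G_q/N_q}\circ(\mathrm{id}\otimes\pi)=(\mathrm{id}\otimes(\mathrm{id}\otimes\pi))\circ\Delta_{G_q}.
\end{equation*}

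The plan is to evaluate both sides on a generic element $a\otimes f\in\mathcal{A}(K_q)\otimes\mathcal{A}(\hat{K}_q)$ and compare. First I would write out the right-hand side: applying $\Delta_{G_q}(a\otimes f)=W_{32}^{-1}(a_{(1)}\otimes f_{(1)}\otimes a_{(2)}\otimes f_{(2)})W_{32}$ and then applying $\mathrm{id}\otimes\pi$ only to the last tensor leg. Since $W$ lives in $M(\mathcal{A}(\hat{K}_q)\otimes\mathcal{A}(K_q))$ and acts on legs $3$ and $2$, the partial application of $\pi$ to the fourth leg does not interfere with the conjugation by $W_{32}$, so the result is exactly $W_{32}^{-1}(a_{(1)}\otimes f_{(1)}\otimes a_{(2)}\otimes\pi(f_{(2)}))W_{32}$. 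Next I would write out the left-hand side: $\mathrm{id}\otimes\pi$ first sends $a\otimes f$ to $a\otimes\pi(f)\in\mathcal{A}(G_q/N_q)$, and then $\Delta_{G_q/N_q}$ is applied, which by its very definition produces $W_{32}^{-1}(a_{(1)}\otimes f_{(1)}\otimes a_{(2)}\otimes\pi(f_{(2)}))W_{32}$. The two expressions coincide, which is the claim.

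Thus the proof is essentially a matter of unwinding the two definitions and observing they agree; the only genuine content is checking that the coaction $\Delta_{G_q/N_q}$ on $\mathcal{A}(G_q/N_q)$ was, in fact, \emph{defined} as the descent of $\Delta_{G_q}$ along $\mathrm{id}\otimes\pi$. I would make this precise by invoking Remark~\ref{coaction}, which guarantees that the formula $a\otimes\pi(f)\mapsto W_{32}^{-1}(a_{(1)}\otimes f_{(1)}\otimes a_{(2)}\otimes\pi(f_{(2)}))W_{32}$ is well defined independently of the choice of preimage $f$ of $\pi(f)$ under $\pi$. This well-definedness is really the crux: since $\pi$ is not injective, one must know that the Sweedler leg $f_{(1)}\otimes\pi(f_{(2)})$ depends only on $\pi(f)$ and not on $f$ itself, and this is exactly the comodule compatibility recorded in Remark~\ref{coaction}.

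The main obstacle, therefore, is not the algebraic comparison (which is immediate once both sides are written down) but the bookkeeping surrounding the multiplier-algebra elements $W_{32}$ and the partial application of $\pi$ to a single tensor leg of an element that a priori lives in a multiplier algebra $M(\mathcal{A}(G_q)\otimes\mathcal{A}(G_q))$. I would handle this by expanding $W$ in matrix coefficients, $W=\sum_{i,j,\sigma}u_{ij}^{\sigma}\otimes\omega_{ij}^{\sigma}$, so that the conjugation becomes an honest (locally finite) sum and the application of $\mathrm{id}\otimes\pi$ to the last leg can be verified term by term, exactly in the spirit of the computation in the preceding proposition. Once this is checked on the algebraic generators, the statement extends to all of $\mathcal{A}(G_q)$ by linearity, completing the argument.
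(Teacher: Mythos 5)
Your proof is correct and takes essentially the same route as the paper: the paper's entire proof is the one-line identity $(\mathrm{id}\otimes\mathrm{id}\otimes\mathrm{id}\otimes\pi)\bigl(W^{-1}_{32}(a_{(1)}\otimes f_{(1)}\otimes a_{(2)}\otimes f_{(2)})W_{32}\bigr)=W^{-1}_{32}(a_{(1)}\otimes f_{(1)}\otimes a_{(2)}\otimes \pi(f_{(2)}))W_{32}$, which is exactly your observation that conjugation by $W_{32}$ only touches legs $3$ and $2$ so that $\pi$ on the fourth leg passes through, the result being $\Delta_{G_q/N_q}(a\otimes\pi(f))$ by definition. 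Your point that the real content is the well-definedness of $\Delta_{G_q/N_q}$ independently of the preimage $f$ is the same issue the paper delegates to Remark~\ref{coaction} at the moment the coaction is defined.
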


We now define a right $\mathcal{A}(L_q)$-coaction on $\mathcal{A}(G_q/N_q)$, denoted $\Delta'_{G_q/N_q}$. For  $a\otimes h\in \mathcal{A}(G_q/N_q)$ we set
$$\Delta'_{G_q/N_q}(a\otimes h)= a_{(1)}\otimes h_{(1)}\otimes \pi_T(a_{(2)})\otimes h_{(2)}\ \in \mathcal{A}(G_q/N_q)\otimes \mathcal{A}(L_q).$$

\begin{prop}\label{prop:commute}
The coactions $\Delta'_{G_q/N_q}$ and $\Delta_{G_q/N_q}$ commute.
\end{prop}
\begin{proof}
We first claim that we have 
$$(\id\otimes \hat{\pi}_T\otimes \id\otimes\hat{\pi}_T)(\Delta'_{G_q/N_q}(a\otimes h))=(\id\otimes \id\otimes\pi_T\otimes \id)[\Delta_{G_q}(a\otimes \hat{\pi}_T(h))].$$

We calculate
\begin{align*}
    (\id\otimes \id&\otimes\pi_T\otimes \id)[\Delta_{G_q}(a\otimes \hat{\pi}_T(h))]
    \\&=a_{(1)}\otimes \omega_{ij}^{\sigma}\hat{\pi}_T(h_{(1)})\omega_{rs}^{\nu}\otimes \pi_T(S(u_{ij}^{\sigma}) a_{(2)}u_{rs}^{\nu})\otimes \hat{\pi}_T(h_{(2)})\\
    &=a_{(1)}\otimes \hat{\pi}_T(h_{(1)})\otimes \pi_T(a_{(2)})\otimes \hat{\pi}_T(h_{(2)})\\
    &=(\id\otimes \hat{\pi}_T\otimes \id\otimes\hat{\pi}_T)(\Delta'_{G_q/N_q}(a\otimes h)).
\end{align*}

We have also that
$$(\id\otimes \id\otimes\id\otimes \hat{\pi})(\Delta_{G_q/N_q}(a\otimes h))=\Delta_{G_q}(a\otimes \hat{\pi}(h)).$$

Now we can prove the proposition. First we rewrite above equalities using the leg notation (we will write $\pi$ and $\hat\pi$ instead of $\pi_T$ and $\hat\pi_T$): 
\begin{align*}
(\hat{\pi}\otimes \hat{\pi})_{24}\circ\Delta'_{G_q/N_q}&=\pi_3\circ\Delta_{G_q}\circ \hat{\pi}_2\\ \hat{\pi}_4\circ\Delta_{G_q/N_q}&=\Delta_{G_q}\circ \hat{\pi}_2\end{align*}
Now observe that we have on the one hand 
\begin{align*}
    (\hat{\pi}\otimes\hat{\pi})_{46}\circ&(\Delta'_{G_q/N_q})_{34}\circ\Delta_{G_q/N_q}\\
    &=((\hat{\pi}\otimes\hat{\pi})_{24}\circ \Delta'_{G_q/N_q})_{34}\circ\Delta_{G_q/N_q}\\
    &=(\pi_3\circ \Delta_{G_q}\circ \hat{\pi}_2)_{34}\circ\Delta_{G_q/N_q}\\
    &=\pi_5\circ (\Delta_{G_q})_{34}\circ \Delta_{G_q}\circ \hat{\pi}_2
\end{align*}
and on the other hand 
\begin{align*}
    (\hat{\pi}\otimes\hat{\pi})_{46}\circ&(\Delta_{G_q/N_q})_{12}\circ\Delta'_{G_q/N_q}\\
&=(\Delta_{G_q})_{12}\circ(\hat{\pi}\otimes\hat{\pi})_{24}\circ \Delta'_{G_q/N_q}\\
&=(\Delta_{G_q})_{12}\circ \pi_3\circ \Delta_{G_q}\circ \hat{\pi}_2\\
&=\pi_5\circ (\Delta_{G_q})_{12}\circ \Delta_{G_q}\circ \hat{\pi}_2\\
\end{align*}
and we conclude the proof using the coassociativity of $\Delta_{G_q}$ and injectivity of $\hat{\pi}_T$.
\end{proof}

Observe now that $\mathcal{A}(G_q/N_q)=\sD(K_q)\otimes\sD(A_q)$ as linear space. On the one hand $\sD(K_q)$ can be considered as a $\sD(T)$-inner product module, since $T$ is a closed quantum subgroup of $K_q$. On the other hand $K_{2\rho}$ is a self-adjoint and group-like element of  $M(\sA(A_q))$; thus $\sD(A_q)$ has a structure of $\sD(A_q)$-inner product module with right action
$$h\cdot l=h*(lK_{2\rho}), $$
and the sesquilinear map defined by
$$\prodscal{h}{k}_{\sD(A_q)}=(h^**k)K_{-2\rho},$$
for all $h,k,l\in \sD(A_q)$. One can thus endow $\mathcal{A}(G_q/N_q)=\sD(K_q)\otimes\sD(A_q)$ with the structure of a $(\sD(T)\otimes \sD(A_q))$-inner product module induced by the tensor product. Let $a\otimes h,~ b\otimes k\in \mathcal{A}(G_q/N_q)$ and $\tau\otimes l\in \sD(L_q)$. We have
\begin{align*}
    \prodscal{a\otimes h}{b\otimes k}_{\sD(L_q)}&=\pi_T(a^**b)\otimes (h^**k)K_{-2\rho},\\
    (a\otimes k)\cdot(\tau\otimes l)&=a*\hat\pi_T(\tau)\otimes k*(lK_{2\rho}).
\end{align*}
\begin{lm}
The left action of $\sD(K_q)$ on $\mathcal{A}(G_q/N_q)$ induced by $\Delta_{G_q/N_q}$ commutes with the right $\sD(L_q)$ action.
\end{lm}
\begin{proof}
This is almost equivalent to Proposition \ref{prop:commute}. Observe that if one precomposes the right $\sD(L_q)$ action by the *-algebra homomorphism of $\sD(L_q)$ given by $x\mapsto x(1\otimes K_{2\rho})$ we obtain exactly the action induced by the coaction $\Delta'_{G_q/N_q}$.\end{proof}
\begin{prop}\label{prop:intertwiner}
The map defined by  
\begin{align*}
    \Phi~:~\mathcal{D}(G_q)\otimes\sD(L_q)&\longrightarrow \mathcal{A}(G_q/N_q)\\
    (a\otimes f)\otimes (\tau\otimes h) &\longmapsto (a\otimes \pi_T(f))\cdot (\tau\otimes h),
\end{align*}
is a $\sD(L_q)$-linear map which intertwines the left action of $\mathcal{D}(G_q)$ and descends to a unitary isomorphism on the balanced tensor product $\mathcal{D}(G_q)\otimes_{\sD(B_q)}\sD(L_q)$.
\end{prop}

\begin{proof}
The $\sD(L_q)$-linearity of $\Phi$ is immediate from the definition since the right $\sD(L_q)$ action on $\mathcal{A}(G_q/N_q)$ is associative. The intertwinning property directly follows from Proposition \ref{intertwiner} and the previous proposition. Let $(a\otimes f)\otimes (\tau\otimes h)$ and $(b\otimes g)\otimes (\zeta\otimes k)$ be in $\mathcal{D}(G_q)\otimes\sD(L_q)$ and consider the elements $[(a\otimes f)\otimes (\tau\otimes h)]$ and $[(b\otimes g)\otimes (\zeta\otimes k)]$ of the balanced tensor product $\mathcal{D}(G_q)\otimes_{\sD(B_q)}\sD(L_q)$. We have 
\begin{align*}
    &\prodscal{[(a\otimes f)\otimes (\tau\otimes h)]}{[(b\otimes g)\otimes (\zeta\otimes k)]}_{\sD(L_q)}\\
    &~~~~~~~~~~~~~~~=\prodscal{(\tau\otimes h)}{\prodscal{a\otimes f}{b\otimes g}_{\sD(B_q)}\cdot (\zeta\otimes k)}_{\sD(L_q)}\\
    &~~~~~~~~~~~~~~~=\prodscal{(\tau\otimes h)}{(\pi_T(a^**b)\otimes \pi_T(f^**g)K_{-2\rho})*(\zeta\otimes k)}_{\sD(L_q)}\\
    &~~~~~~~~~~~~~~~=\prodscal{(\tau\otimes h)}{(\pi_T(a^**b)*\zeta)\otimes( \pi_T(f^**g)K_{-2\rho})*k)}_{\sD(L_q)}\\
    &~~~~~~~~~~~~~~~=(\tau^**\pi_T(a^**b)*\zeta) \otimes (h^**\pi_T((f^**g)K_{-2\rho})*k)\\
    &~~~~~~~~~~~~~~~=(\pi_T(a*\hat{\pi}_T(\tau))^**b*\hat{\pi}_T(\zeta)) \otimes (h^**\pi_T(f^**g)K_{-2\rho}*k)\\
    &~~~~~~~~~~~~~~~=(\pi_T(a*\hat{\pi}_T(\tau))^**b*\hat{\pi}_T(\zeta)) \otimes (h^**\pi_T(f)^*K_{2\rho}*\pi_T(g)K_{2\rho}*k)K_{-2\rho}\\
    &~~~~~~~~~~~~~~~=\prodscal{(a\otimes \pi_T(f))\cdot (\tau\otimes h)}{(b\otimes \pi_T(g))\cdot (\zeta\otimes k)}_{\sD(L_q)}.
\end{align*}
Thus the map $\Psi$ descend to a unitary map on the balanced tensor product. With regard to the surjectivity it is enough to observe that the right $\sD(L_q)$-action on $\mathcal{A}(G_q/N_q)$ is essential.
\end{proof}
The following theorem is now immediate.
\begin{thm}\label{iso}
The pre-Hilbert $\mathcal{D}(L_q)$-module  $\mathcal{A}(G_q/N_q)$ can be completed into a Hilbert $C^*(L_q)$-module $\mathcal{E}(G_q/N_q)$ and we have 
$$\mathcal{E}(G_q/N_q)\cong \mathcal{E}(G_q)\otimes_{C^*(B_q)} C^*(L_q),$$ 
as $G_q$-representations. The tensor product $\mathcal{E}(G_q/N_q)\otimes_{C^*(L_q)}\--$ defines a functor from the category of unitary $C^*(L_q)$-representations to the category of unitary $C^*_u(G_q)$-representations which coincides with parabolic induction.
\end{thm}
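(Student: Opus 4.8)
The plan is to lean on the preceding proposition, which already shows that $\Lambda$ transports the balanced-tensor-product inner product on $\mathcal{E}(G_q)\otimes_{\mathcal{D}(B_q)}\mathcal{D}(L_q)$ onto the explicit form $\prodscal{~}{~}_{\mathcal{D}(L_q)}$ on $\mathcal{A}(G_q/N_q)$, so that the analytic content is already in hand and the theorem becomes a matter of assembling the pieces. First I would settle the completion. The inner product on the interior tensor product $\mathcal{E}(G_q)\otimes_{\mathcal{D}(B_q)}\mathcal{D}(L_q)$ is positive by the general theory of Hilbert module tensor products (\cite{lance}); since $\Lambda$ is surjective onto $\mathcal{A}(G_q/N_q)$ and intertwines the two forms, $\prodscal{~}{~}_{\mathcal{D}(L_q)}$ is itself positive. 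One then completes $\mathcal{A}(G_q/N_q)$ in the usual way, passing to the quotient by the null space and closing, to obtain the Hilbert $C^*(L_q)$-module $\mathcal{E}(G_q/N_q)$.

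For the isomorphism I would argue that $\Lambda$ descends to a unitary of completions. Being inner-product preserving, $\Lambda$ annihilates exactly the null vectors of the source and induces an injective isometry on the quotient; its image is $\mathcal{A}(G_q/N_q)$, which is dense in $\mathcal{E}(G_q/N_q)$ by construction. Hence $\Lambda$ extends to a unitary isomorphism of Hilbert $C^*(L_q)$-modules $\mathcal{E}(G_q)\otimes_{C^*(B_q)}C^*(L_q)\xrightarrow{\ \sim\ }\mathcal{E}(G_q/N_q)$. Since $\Lambda$ was already checked to intertwine the left $\mathcal{D}(G_q)$-convolution actions (coming from Proposition \ref{intertwiner} on one side and from the canonical left action on $\mathcal{E}(G_q)$ on the other), this isomorphism is one of $G_q$-representations after extending to $C^*(G_q)$, which is the stated equivalence.

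To identify the functor $\mathcal{E}(G_q/N_q)\otimes_{C^*(L_q)}-$ with parabolic induction, I would invoke associativity of the interior tensor product together with the isomorphism just obtained. For a unitary $C^*(L_q)$-representation $V$,
\begin{align*}
\mathcal{E}(G_q/N_q)\otimes_{C^*(L_q)}V &\cong \bigl(\mathcal{E}(G_q)\otimes_{C^*(B_q)}C^*(L_q)\bigr)\otimes_{C^*(L_q)}V\\
&\cong \mathcal{E}(G_q)\otimes_{C^*(B_q)}\bigl(C^*(L_q)\otimes_{C^*(L_q)}V\bigr)\\
&\cong \mathcal{E}(G_q)\otimes_{C^*(B_q)}V,
\end{align*}
where in the last line $V$ is regarded as a $B_q$-representation through the restriction $(\text{id}\otimes\pi):\mathcal{D}(B_q)\to\mathcal{D}(L_q)$. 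By the definition of the induction bimodule in Section 4 the right-hand side is precisely $\text{Ind}_{B_q}^{G_q}V$, i.e.\ the parabolic induction of $V$, and naturality of these isomorphisms in $V$ upgrades the statement to an equivalence of functors; the earlier proposition on $\mathbb{C}_{\mu,\lambda}$ is recovered as the special case $V=\mathbb{C}_{\mu,\lambda}$.

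I expect the main obstacle to lie in the completion step and, more precisely, in verifying that $\Lambda$ genuinely descends to an isomorphism rather than a mere contraction: one must confirm that its null space is carried exactly to $0$ and that surjectivity at the algebraic level really yields density of the image after $C^*$-completion, so that no information is lost. The functoriality claim, by contrast, is a formal consequence of associativity and naturality of interior tensor products and should require only routine bookkeeping.
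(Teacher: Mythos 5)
Your proposal is correct and follows essentially the same route as the paper, which treats the theorem as a direct assembly of the preceding propositions: positivity of $\prodscal{~}{~}_{\mathcal{D}(L_q)}$ is inherited from the balanced tensor product through the surjective, inner-product-compatible, $\mathcal{D}(G_q)$-intertwining map $\Lambda$, which then extends to a unitary of completions, and the functor identification is the same formal associativity argument recovering the $\mathbb{C}_{\mu,\lambda}$ case. The obstacles you flag are not serious: an inner-product-preserving map automatically sends null vectors to null vectors, and density of the image after completion is immediate since $\mathcal{E}(G_q/N_q)$ is by definition the completion of $\mathcal{A}(G_q/N_q)$.
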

By the Fourier transform, we have 
\[C^*(L_q)\cong C_0(\widehat{L_q})=C_0(\textbf{P}\times T),\]
such that the characters of $C^*(L_q)$ become the evaluation maps 
\[\mathrm{ev}_{(\mu,\lambda)}~:~C_0(\textbf{P}\times T)\rightarrow \mathbb{C}_{\mu,\lambda}.\]


According to \cite[Theorem 7.1]{yuncken-voigt} we have 
$$C^*_r(G_q)\cong C_0(\textbf{P}\times \mathfrak{t}_q^*,\mathcal{K}(H))^W,$$
where $H$ is a countable dimensional Hilbert space, and the action of the Weyl group $W$ is a lifting of its action by reflections on $\textbf{P}\times \mathfrak{t}_q^*$ to an action on the bundle of $C^*$-algebras. More precisely, the Hilbert space $H$ at the parameter $(\mu,\lambda)\in \textbf{P}\times \mathfrak{t}_q^*$ is identified with the parabolically induced representation of $G_q$,
\begin{align*}
    H=&H_{\mu,\lambda}=\mathrm{Ind}_{B_q}^{G_q}\CC_{\mu,\lambda}\\
    \cong&H_{\mu}=\mathrm{Ind}_{T}^{K_q}\CC_{\mu}\\
    =&\overline{\{\xi\in\sA(K_q)~|~\Delta(\xi)=\xi\otimes e^\mu\}}^{\|\cdot\|_{L^2(K_q)}},
\end{align*}
which is a trivial Hilbert bundle on each connected component $\{\mu\}\times \mathfrak{t}_q^*$ of the parameter space. The action of $W$ is via intertwiners of principal series representations. In this way, we have 
$$C^*_r(G_q)=(\mathfrak{K}(\underset{\mu\in \textbf{P}}{\bigoplus}C_0(\mathfrak{t}_q^*,H_\mu)))^W, $$
where $\mathfrak{K}$ denotes compact operators on the right Hilbert $C_0(\textbf{P}\times \mathfrak{t}_q^*)$-module. 

By theorem \ref{iso} we have 
\[H_{\mu,\lambda}\cong  \mathcal{E}(G_q)\otimes_{C^*(B_q)}\CC_{\mu,\lambda},\]
as left $C^*_u(G_q)$-module. Therefore 
$$C_0(\mathfrak{t}_q^*,H_\mu)\cong  \mathcal{E}(G_q/N_q)\otimes_{C^*(L_q)}C_0(\mathfrak{t}_q^*)_\mu,$$
as left $C^*_u(G_q)$-module and right $C_0(\mathfrak{t}_q^*)$-Hilbert module, where $C_0(\mathfrak{t}_q^*)_\mu$ denotes $C_0(\mathfrak{t}_q^*)$ equipped with the left action of $C^*(L_q)=C^*(T)\hat\otimes C^*(A_q)=C_0(\textbf{P})\hat\otimes C_0(\mathfrak{t}_q^*)$ such that $C_0(\mathfrak{t}_q^*)$ acts by pointwise multiplication and $C_0(\textbf{P})$ acts by evaluation at $\mu$. We thus obtain 
\begin{align*}
   \underset{\mu\in \textbf{P}}{\bigoplus}C_0(\mathfrak{t}_q^*,H_\mu)&=\mathcal{E}(G_q/N_q)\otimes_{C^*(L_q)}C_0(\textbf{P}\times \mathfrak{t}_q^*)\\
   &=\mathcal{E}(G_q/N_q).
\end{align*}
We have therefore proven the following result.

\begin{cor}
Let $G_q$ be a complex semi-simple quantum group. Then 
\[C^*_r(G_q)\cong \mathfrak{K}(\mathcal{E}(G_q/N_q))^W,\]
where $\mathfrak{K}$ indicates the algebra of compact operators in the sense of Hilbert modules.
\end{cor}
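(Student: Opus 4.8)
The plan is to combine the geometric identification of Theorem \ref{iso} with the Plancherel decomposition of \cite{Plancherel}. I would first recall the precise content of Theorem 7.1 of \cite{Plancherel}, which realises $C^*_r(G_q)$ as a $W$-equivariant continuous field of elementary $C^*$-algebras over $\mathbf{P}\times T$: concretely it identifies the reduced algebra with the $W$-invariant part of the field whose fibre at $(\mu,\lambda)$ is $\mathfrak{K}(\mathrm{Ind}_{B_q}^{G_q}\mathbb{C}_{\mu,\lambda})$. The support of the Plancherel weight being exactly the unitary principal series, no other representations contribute, and the $W$-invariance encodes the equivalences between principal series related by intertwining operators.

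Next I would invoke Theorem \ref{iso} together with the Fourier isomorphism $C^*(L_q)\cong C_0(\mathbf{P}\times T)$ to present this field intrinsically as the Hilbert module $\mathcal{E}(G_q/N_q)$, so that evaluation $\mathrm{ev}_{(\mu,\lambda)}$ recovers the fibre $\mathrm{Ind}_{B_q}^{G_q}\mathbb{C}_{\mu,\lambda}$. The structural fact to record here is that for any Hilbert $C_0(X)$-module $E$ the algebra $\mathfrak{K}(E)$ is itself a $C^*$-algebra over $X$ whose fibre at $x$ is $\mathfrak{K}(E_x)$; applying this with $E=\mathcal{E}(G_q/N_q)$ and $X=\mathbf{P}\times T$ identifies $\mathfrak{K}(\mathcal{E}(G_q/N_q))$ with the continuous field of compact operators appearing in the Plancherel theorem. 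This is the step that rewrites the abstract Plancherel field in the module-theoretic language of the corollary.

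Finally I would transport the Weyl group action. By the discussion following Theorem \ref{iso} (see \cite{yuncken-voigt}), $W$ acts on $\mathcal{E}(G_q/N_q)$ by $C^*(G_q)$-linear isomorphisms covering its action on $\mathbf{P}\times T$, and hence acts on $\mathfrak{K}(\mathcal{E}(G_q/N_q))$. Since an isomorphism of Hilbert modules induces a $C^*$-isomorphism of their compact operators, and since the identification $\mathcal{H}\cong\mathcal{E}(G_q/N_q)$ is equivariant for these actions, passing to $W$-invariants gives $\mathfrak{K}(\mathcal{H})^W\cong\mathfrak{K}(\mathcal{E}(G_q/N_q))^W$; combining with Theorem 7.1 of \cite{Plancherel} yields $C^*_r(G_q)\cong\mathfrak{K}(\mathcal{E}(G_q/N_q))^W$.

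The hard part will be verifying that the $W$-action inherited by $\mathcal{E}(G_q/N_q)$ from the intertwining operators of \cite{yuncken-voigt} agrees, fibrewise and with the correct normalisations, with the $W$-action used in \cite{Plancherel} to cut out $C^*_r(G_q)$; once the two actions are matched the invariants coincide automatically. A secondary technical point is to check that, at the points of $\mathbf{P}\times T$ fixed by elements of $W$ (where the principal series may become reducible), the fibrewise description of $\mathfrak{K}$ remains compatible with taking invariants, which is precisely where the continuous-field hypotheses of \cite{Plancherel} enter.
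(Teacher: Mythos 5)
Your proposal is correct and follows essentially the same route as the paper, whose proof consists precisely of the discussion preceding the corollary: the identification $\mathcal{H}\cong\mathcal{E}(G_q/N_q)$ from Theorem \ref{iso} together with the Fourier isomorphism $C^*(L_q)\cong C_0(\textbf{P}\times T)$, the $C^*(G_q)$-linear Weyl group action imported from \cite{yuncken-voigt}, and a reinterpretation of Theorem 7.1 of \cite{Plancherel}. Your version merely makes explicit two points the paper leaves to the citations (the fibrewise description of $\mathfrak{K}$ of a $C_0(X)$-module and the matching of the two $W$-actions), which is a reasonable elaboration rather than a different argument.
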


In the classical case, this result has been first obtained in \cite{Wassermann} and reformulated in \cite{CCH} with the Rieffel induction framework.

\bibliographystyle{alpha}
\bibliography{refs.bib}

\end{document}